\definecolor{darkblue}{rgb}{0, 0, .6}
\definecolor{grey}{rgb}{.7, .7, .7}
\definecolor{blue}{rgb}{.36, .74, .95}
\definecolor{blue2}{rgb}{0, .456, .712}
\definecolor{orange}{rgb}{.94, .636, 0}
\definecolor{white}{rgb}{1, 1, 1}
\definecolor{green}{rgb}{0, .632, .46}
\definecolor{purple}{rgb}{.5, .4, .9}
\theoremstyle{definition}
\newtheorem{theorem}{Theorem}[section]
\newtheorem{proposition}[theorem]{Proposition}
\begin{document}


\title{Prime Vertex Labelings of Several Families of Graphs}

\author[Diefenderfer]{Nathan Diefenderfer}

\author[Ernst]{Dana C.~Ernst}

\author[Hastings]{Michael G.~Hastings}

\author[Heath]{Levi N.~Heath}

\author[Prawzinsky]{Hannah Prawzinsky}

\author[Preston]{Briahna Preston}

\author[Rushall]{Jeff Rushall}

\author[White]{Emily White}

\author[Whittemore]{Alyssa Whittemore}
\address{Department of Mathematics and Statistics, Northern Arizona University, Flagstaff, AZ 86011}
\email{ned32@nau.edu, dana.ernst@nau.edu, mgh64@nau.edu, lnh57@nau.edu, hpp3@nau.edu, bep38@nau.edu, jeffrey.rushall@nau.edu, ekw49@nau.edu, anw99@nau.edu}

\thanks{This research was supported by the National Science Foundation grant \#DMS-1148695 through the Center for Undergraduate Research (CURM)}

\date{\today}

\maketitle


\tikzstyle{vert6} = [circle, draw, fill=blue2,inner sep=0pt, minimum size=4mm]
\tikzstyle{vert5} = [circle, draw, fill=orange,inner sep=0pt, minimum size=4mm]
\tikzstyle{vert4} = [circle, draw, fill=grey,inner sep=0pt, minimum size=4mm]
\tikzstyle{vert3} = [circle, draw, fill=green,inner sep=0pt, minimum size=4mm]
\tikzstyle{vert2} = [circle, draw, fill=blue,inner sep=0pt, minimum size=4mm]
\tikzstyle{vert} = [circle, draw, fill=grey,inner sep=0pt, minimum size=4mm]
\tikzstyle{vertp} = [circle, draw, fill=purple,inner sep=0pt, minimum size=4mm]
\tikzstyle{vert7} = [circle, draw, fill=grey,inner sep=0pt, minimum size=6mm]
\tikzstyle{b} = [draw,very thick,blue,-]
\tikzstyle{r} = [draw, very thick, red,-]
\tikzstyle{g} = [draw, very thick, green, -]
\tikzstyle{blk} = [draw, very thick, black, -]
\tikzstyle{d} = [draw, very thick,red, dashed]


\begin{abstract}
A simple and connected $n$-vertex graph has a prime vertex labeling if the vertices can be injectively labeled with the integers $1, 2, 3,\ldots, n$, such that adjacent vertices have relatively prime labels. We will present previously unknown prime vertex labelings for new families of graphs including cycle pendant stars, cycle chains, prisms, and generalized books.
\end{abstract}


\section{Introduction}


The focus of this paper is prime vertex labelings, wherein adjacent vertices of simple, connected graphs are assigned integer labels that are relatively prime.  Currently, the two most prominent open conjectures involving prime vertex labelings are the following:
\begin{itemize}
\item All tree graphs have a prime vertex labeling (Entringer--Tout Conjecture);
\item All unicyclic graphs have a prime vertex labeling (Seoud and Youssef~\cite{Seoud1999}).
\end{itemize}
While we will address one infinite family that is unicyclic, our primary concern will be nonunicyclic graphs.

A \emph{graph} $G$ is a set of vertices, $V(G)$, together with a set of edges, $E(G)$, connecting some subset, possibly empty, of the vertices. If $u,v \in V(G)$ are connected by an edge, we say $u$ and $v$ are \emph{adjacent}. The \emph{degree} of a vertex $u$ is the number of edges incident to $u$. A \emph{subgraph} $H$ of a graph $G$ is a graph whose vertex set is a subset of that of $G$, and whose adjacency relation is a subset of that of $G$ restricted to this subset.

We will restrict our attention to graphs that are simple (i.e., graphs that do not have multiple edges between pairs of vertices nor edges that connect a vertex to itself) and connected (i.e., graphs that do not consist of two or more disjoint ``pieces"). For the remainder of this paper, all graphs are assumed to be simple and connected.

Next, we define a few important families of graphs.  For $n\geq 2$, an \emph{$n$-path} (or simply \emph{path}), denoted $P_n$, is a connected graph consisting of two vertices with degree $1$ and $n-2$ vertices of degree $2$.  For $n\geq 3$, an \emph{$n$-cycle} (or simply \emph{cycle}), denoted $C_n$, is a connected graph consisting of $n$ vertices, each of degree $2$. Note that both $P_{n}$ and $C_n$ have $n$ vertices while $P_n$ has $n-1$ edges and $C_n$ has $n$ edges.  An \emph{$n$-star} (or simply \emph{star}), denoted $S_n$, is a graph consisting of one vertex of degree $n$, called the \emph{center}, and $n$ vertices of degree $1$. Note that $S_n$ consists of $n+1$ vertices and $n$ edges. The star $S_4$ is shown in Figure~\ref{fig:S4}.

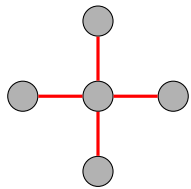
\begin{figure}[!ht]
\begin{center}
\begin{tikzpicture} {scale=1,auto}
\node (a) at (0,0) [vert] {};
\node (b) at (-1,0) [vert] {};
\node (c) at (1,0) [vert] {};
\node (d) at (0,1) [vert] {};
\node (e) at (0,-1) [vert] {};
\path[r] (a) to (b);
\path[r] (a) to (c);
\path[r] (a) to (d);
\path[r] (a) to (e);
\end{tikzpicture}
\end{center}
\caption{The star $S_{4}$.}\label{fig:S4}
\end{figure}

A simple and connected $n$-vertex graph is said to have a \emph{prime vertex labeling} if the vertices can be injectively labeled with the integers $1, 2, 3,\ldots, n$, such that adjacent vertices have relatively prime labels. For brevity, if a graph has a prime vertex labeling, we will say that the graph is \emph{prime}.  The many familiar families of graphs that are known to be prime include paths, cycles, and stars.

In this paper, we will present previously unknown prime vertex labelings for several infinite families of graphs including cycle pendant stars (Section~\ref{sec:cyclepenstars}), cycle chains (Section~\ref{sec:cyclechains}), prisms (Section~\ref{sec:prismgraphs}), and generalized books (Section~\ref{sec:generalizedbooks}). Finally, some conjectures and potential future work will be described in Section~\ref{sec:conclusion}.


\section{Cycle Pendant Stars}\label{sec:cyclepenstars}


The focus of this section is on a type of unicyclic graph (i.e., a graph containing exactly one cycle as a subgraph), which was inspired by Seoud and Youssef's conjecture that all unicyclic graphs are prime~\cite{Seoud1999}. Instead of attempting to prove the conjecture outright, which we anticipate would require advanced machinery from linear algebra, most authors concentrate on finding prime labelings for specific families of unicyclic graphs. In particular, this was our endeavor in~\cite{Diefenderfer2015}.

Every vertex lying on the cycle of a unicyclic graph will be referred to as a \emph{cycle vertex}.  In a unicyclic graph, a \emph{spur} is an edge with exactly one vertex on the cycle.  The non-cycle vertex of a spur is called a \emph{spur vertex}. For example, the graph shown in Figure~\ref{fig:unicyclic spurs} is a unicyclic graph with five spurs. In this case, the vertices labeled by $c_1, c_2, c_3$, and $c_4$ are cycle vertices while the vertices labeled by $p_1, p_2, p_3, p_4$, and $p_5$ are spur vertices.

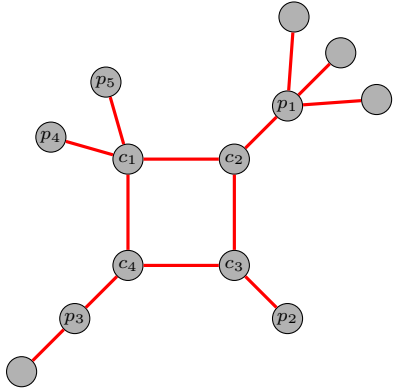
\begin{figure}
\begin{center}
\begin{tikzpicture}[scale=1,auto]
\node (1) at (45:1) [vert] {\scriptsize $c_2$};
\node (2) at (135:1) [vert] {\scriptsize $c_1$};
\node (3) at (225:1) [vert] {\scriptsize $c_4$};
\node (4) at (315:1) [vert] {\scriptsize $c_3$};
\node (5) at (45:2) [vert] {\scriptsize $p_1$};
\node (6) at (45:3) [vert] {\scriptsize};
\node (7) at (315:2) [vert] {\scriptsize $p_2$};
\node (8) at (225:2) [vert] {\scriptsize $p_3$};
\node (9) at (225:3) [vert] {\scriptsize};
\node (10) at (150:2) [vert] {\scriptsize $p_4$};
\node (11) at (120:2) [vert] {\scriptsize $p_5$};
\node (12) at (60:3) [vert] {\scriptsize};
\node (13) at (30:3) [vert] {\scriptsize};
\path [r] (1) to (2);
\path [r] (2) to (3);
\path [r] (3) to (4);
\path [r] (4) to (1);
\path [r] (1) to (5);
\path [r] (5) to (6);
\path [r] (4) to (7);
\path [r] (3) to (8);
\path [r] (8) to (9);
\path [r] (2) to (10);
\path [r] (2) to (11);
\path [r] (5) to (12);
\path [r] (5) to (13);
\end{tikzpicture}
\end{center}
\caption{Example of a unicyclic graph with five spurs.}\label{fig:unicyclic spurs}
\end{figure}

In~\cite{Seoud1999}, Seoud and Youssef investigated cycles with identical complete binary trees attached to each cycle vertex. These unicyclic graphs can also be viewed as cycles with various levels of the star $S_2$ attached to one another. This observation led to the following generalization, which features a single ``level" of stars, but with differing star sizes. A \emph{cycle pendant star}, denoted $C_{n}\star P_{2}\star S_{m}$, is the graph that results from attaching the path $P_2$ to each vertex of $C_n$ followed by attaching the star $S_m$ at its center to each spur vertex. For example, the graph $C_5\star P_2\star S_6$ is shown in Figure~\ref{fig:UnlabeledC5P2S6}. Note that our $\star$ notation is not a construction typically found in the literature and refers to ``selectively gluing" copies of one graph to another.

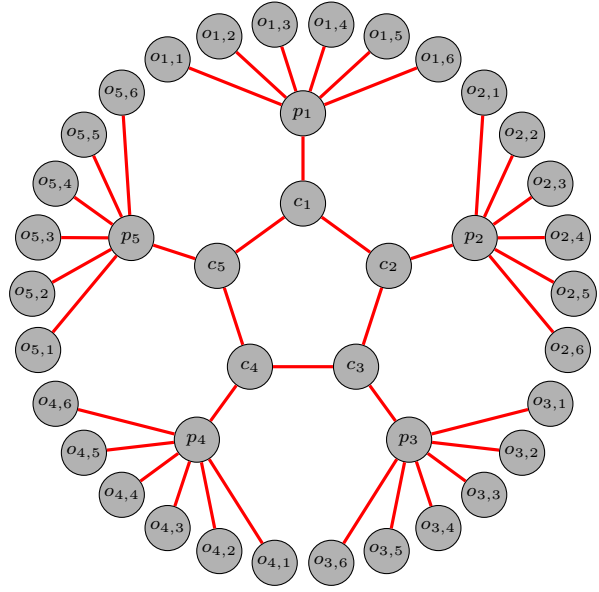
\begin{figure}
\begin{center}
\begin{tikzpicture}[scale=1.2,auto]

\node (a) at (90:1) [vert7] {\scriptsize $c_1$};
\node (b) at (90:2) [vert7] {\scriptsize $p_1$};
\node (c) at (120:3) [vert7] {\scriptsize $o_{1,1}$};
\node (d) at (108:3) [vert7] {\scriptsize $o_{1,2}$};
\node (e) at (96:3) [vert7] {\scriptsize $o_{1,3}$};
\node (f) at (84:3) [vert7] {\scriptsize $o_{1,4}$};
\node (g) at (72:3) [vert7] {\scriptsize $o_{1,5}$};
\node (h) at (60:3) [vert7] {\scriptsize $o_{1,6}$};

\path[r] (a) to (b);
\path[r] (b) to (c);
\path[r] (b) to (e);
\path[r] (b) to (d);
\path[r] (b) to (f);
\path[r] (b) to (g);
\path[r] (b) to (h);

\begin{scope}[rotate=-72]
\node (a') at (90:1) [vert7] {\scriptsize $c_2$};
\node (b') at (90:2) [vert7] {\scriptsize $p_2$};
\node (c') at (120:3) [vert7] {\scriptsize $o_{2,1}$};
\node (d') at (108:3) [vert7] {\scriptsize $o_{2,2}$};
\node (e') at (96:3) [vert7] {\scriptsize $o_{2,3}$};
\node (f') at (84:3) [vert7] {\scriptsize $o_{2,4}$};
\node (g') at (72:3) [vert7] {\scriptsize $o_{2,5}$};
\node (h') at (60:3) [vert7] {\scriptsize $o_{2,6}$};

\path[r] (a') to (b');
\path[r] (b') to (c');
\path[r] (b') to (e');
\path[r] (b') to (d');
\path[r] (b') to (f');
\path[r] (b') to (g');
\path[r] (b') to (h');
\end{scope}

\begin{scope}[rotate=-144]
\node (a'') at (90:1) [vert7] {\scriptsize $c_3$};
\node (b'') at (90:2) [vert7] {\scriptsize $p_3$};
\node (c'') at (120:3) [vert7] {\scriptsize $o_{3,1}$};
\node (d'') at (108:3) [vert7] {\scriptsize $o_{3,2}$};
\node (e'') at (96:3) [vert7] {\scriptsize $o_{3,3}$};
\node (f'') at (84:3) [vert7] {\scriptsize $o_{3,4}$};
\node (g'') at (72:3) [vert7] {\scriptsize $o_{3,5}$};
\node (h'') at (60:3) [vert7] {\scriptsize $o_{3,6}$};

\path[r] (a'') to (b'');
\path[r] (b'') to (c'');
\path[r] (b'') to (e'');
\path[r] (b'') to (d'');
\path[r] (b'') to (f'');
\path[r] (b'') to (g'');
\path[r] (b'') to (h'');
\end{scope}

\begin{scope}[rotate=144]
\node (a''') at (90:1) [vert7] {\scriptsize $c_4$};
\node (b''') at (90:2) [vert7] {\scriptsize $p_4$};
\node (c''') at (120:3) [vert7] {\scriptsize $o_{4,1}$};
\node (d''') at (108:3) [vert7] {\scriptsize $o_{4,2}$};
\node (e''') at (96:3) [vert7] {\scriptsize $o_{4,3}$};
\node (f''') at (84:3) [vert7] {\scriptsize $o_{4,4}$};
\node (g''') at (72:3) [vert7] {\scriptsize $o_{4,5}$};
\node (h''') at (60:3) [vert7] {\scriptsize $o_{4,6}$};

\path[r] (a''') to (b''');
\path[r] (b''') to (c''');
\path[r] (b''') to (e''');
\path[r] (b''') to (d''');
\path[r] (b''') to (f''');
\path[r] (b''') to (g''');
\path[r] (b''') to (h''');
\end{scope}

\begin{scope}[rotate=72]
\node (a'''') at (90:1) [vert7] {\scriptsize $c_5$};
\node (b'''') at (90:2) [vert7] {\scriptsize $p_5$};
\node (c'''') at (120:3) [vert7] {\scriptsize $o_{5,1}$};
\node (d'''') at (108:3) [vert7] {\scriptsize $o_{5,2}$};
\node (e'''') at (96:3) [vert7] {\scriptsize $o_{5,3}$};
\node (f'''') at (84:3) [vert7] {\scriptsize $o_{5,4}$};
\node (g'''') at (72:3) [vert7] {\scriptsize $o_{5,5}$};
\node (h'''') at (60:3) [vert7] {\scriptsize $o_{5,6}$};

\path[r] (a'''') to (b'''');
\path[r] (b'''') to (c'''');
\path[r] (b'''') to (e'''');
\path[r] (b'''') to (d'''');
\path[r] (b'''') to (f'''');
\path[r] (b'''') to (g'''');
\path[r] (b'''') to (h'''');
\end{scope}

\path[r] (a) to (a');
\path[r] (a') to (a'');
\path[r] (a'') to (a''');
\path[r] (a''') to (a'''');
\path[r] (a'''') to (a);

\end{tikzpicture}
\end{center}
\caption{The graph $C_{5}\star P_{2}\star S_{6}$.}\label{fig:UnlabeledC5P2S6}
\end{figure}

\begin{theorem}\label{thm:CnP2Sm}
All $C_{n}\star P_{2}\star S_{m}$ with $0\leq m\leq 8$ are prime.
\end{theorem}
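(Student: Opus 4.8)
The plan is to give, for each value $m\in\{0,1,\ldots,8\}$, an explicit prime vertex labeling of $C_n\star P_2\star S_m$ valid for all $n\geq 3$. Write $k=m+2$, so that $C_n\star P_2\star S_m$ has exactly $N=nk$ vertices, partitioned into $n$ ``arms,'' the $i$-th arm being $\{c_i,p_i,o_{i,1},\ldots,o_{i,m}\}$. The guiding observation is that the only labels subject to real constraints are those of the cycle vertices $c_i$ (each adjacent to $c_{i-1}$, $c_{i+1}$, and $p_i$) and of the spur vertices $p_i$ (each adjacent to $c_i$ and to the $m$ leaves $o_{i,1},\ldots,o_{i,m}$); the $nm$ leaves have degree $1$, so once $p_i$ has been labeled, its leaves may absorb \emph{any} remaining labels that happen to be relatively prime to the label of $p_i$.

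The first step is to partition $\{1,2,\ldots,nk\}$ into $n$ consecutive blocks $B_1<B_2<\cdots<B_n$, each of size $k=m+2\le 10$, and to assign block $B_i$ to arm $i$. The arithmetic input I would use is a Jacobsthal-type fact: every block of at most $10$ consecutive integers contains an element relatively prime to $210=2\cdot 3\cdot 5\cdot 7$. This is sharp --- the nine consecutive integers $2,3,\ldots,10$ are each divisible by one of $2,3,5,7$, while one checks that no run of ten has this property --- and it is exactly this sharpness that makes block size $10$ (equivalently $m\le 8$) the ceiling for the method. For each $i$, take the spur label $p_i$ to be such an element of $B_i$. Since any two members of $B_i$ differ by less than $10$, a prime dividing both $p_i$ and some other element of $B_i$ would have to lie in $\{2,3,5,7\}$; as $p_i$ has none of these factors, $p_i$ is automatically relatively prime to every other element of $B_i$, hence to $c_i$ and to all of the leaves in arm $i$, no matter how the remaining $m+1$ labels of $B_i$ are distributed among $c_i,o_{i,1},\ldots,o_{i,m}$.

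The second step is to choose the cycle labels $c_i$ from the leftover elements of the blocks and to check the cycle edges $c_ic_{i+1}$ (indices modulo $n$). The key point here is a parity trick: if $c_i$ and $c_{i+1}$ are both relatively prime to $210$, they are automatically relatively prime to each other, since a common prime factor would be at least $11$ and would divide $c_{i+1}-c_i<2k\le 20$, forcing that difference to equal the prime; dividing through would exhibit $c_i$ and $c_{i+1}$ as $q$ times two consecutive integers, both of which would have to be odd --- impossible. So in the generic situation, where each block $B_i$ contains at least two elements relatively prime to $210$, I would let $c_i$ be a second such element and assign the remaining $m$ members of $B_i$ to $o_{i,1},\ldots,o_{i,m}$ arbitrarily; this completes the construction. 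The case $m=0$ is just the corona $C_n\odot K_1$ and is handled at once by $c_i=2i-1$, $p_i=2i$.

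The step I expect to be the main obstacle is the non-generic situation --- a block $B_i$ containing only one element relatively prime to $210$ (this can happen when $m=8$, e.g.\ the block $[111,120]$, where only $113$ is relatively prime to $210$). There $p_i$ is forced, and $c_i$ must be drawn from nine elements none of which is relatively prime to $210$, so the clean argument above no longer applies and one must verify by hand that some admissible choice of $c_i$ keeps $\gcd(c_i,c_{i-1})=\gcd(c_i,c_{i+1})=1$. In practice there is slack, because neighboring blocks are themselves generic and their cycle labels can be chosen to avoid any prescribed small prime factor; but orchestrating the choice of \emph{all} the $c_i$ simultaneously, so that every consecutive pair is relatively prime, is the real work. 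I would carry this out by a finite case analysis: for the finitely many $m\in\{0,\ldots,8\}$, track the residues of the block endpoints modulo $2,3,5,7$ (equivalently, work modulo $210$), together if necessary with the residue of $n$, and in each configuration exhibit a workable assignment of the $c_i$. This bookkeeping, rather than any single hard idea, is the crux of the proof.
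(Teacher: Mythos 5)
Your block-partition strategy is genuinely different from the paper's (which writes down explicit closed-form labelings for each $m$, with cycle labels $\equiv 1 \pmod{m+2}$ and small congruence-based adjustments for the spur labels), but as it stands it is a plan with real holes rather than a proof. The most serious one is the part you yourself flag as ``the real work'': when a block contains only one element coprime to $210$, you must still choose $c_i$ from labels all having a factor in $\{2,3,5,7\}$ and keep it coprime to both cycle neighbors, and you must do this \emph{simultaneously} for all $i$. You defer this to ``a finite case analysis'' over residues modulo $210$ (and possibly $n$) without carrying any of it out; note also that such deficient blocks already occur for $m=6$ (e.g.\ $\{114,\ldots,121\}$, where only $121$ is coprime to $210$) and even for the very first block $\{1,\ldots,10\}$ when $m=8$, so this is not a rare corner case. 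Since the statement being proved is exactly the existence of a consistent global assignment, leaving this orchestration unexecuted leaves the theorem unproved. The paper avoids the issue entirely by construction: its cycle labels are all $\equiv 1$ modulo the block size, hence pairwise handled by a one-line parity/difference argument, and the spur label is shifted by an explicit congruence rule in the few residue classes where the default choice fails.

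Two further gaps: first, your coprimality argument for cycle edges bounds $|c_{i+1}-c_i|<2k\le 20$, which is false for the wrap-around edge $c_nc_1$ (difference roughly $(n-1)k$), so even the ``generic'' step does not cover all cycle edges; the paper's labeling sidesteps this because $c_1=1$. Second, your stated arithmetic input --- ``every block of at most $10$ consecutive integers contains an element relatively prime to $210$'' --- is false for blocks of size less than $10$ (e.g.\ $\{4,5,6\}$ has none); the correct fact is the Jacobsthal-type statement for blocks of size exactly (or at least) $10$, and for smaller $m$ you need the analogous statement with the primorial matched to the block size $m+2$ (an element with no prime factor $<m+2$ suffices). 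Both of these are repairable, but together with the unexecuted case analysis they mean the proposal does not yet establish the theorem.
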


\begin{proof}
The cases involving $m=0,1,2,3$ correspond to familiar graphs with known prime vertex labelings. Seoud and Youssef showed that pendant graphs ($m=0$), double pendant graphs ($m=1$), and graphs with identical complete binary trees attached to the spur vertices of a pendant graph are prime (which includes the case $m=2$)~\cite{Seoud1999}. Diefenderfer et~al.~showed that $C_{n}\star P_{2}\star S_{3}$ has a prime vertex labeling in~\cite{Diefenderfer2015}.

Now, consider $C_{n}\star P_{2}\star S_{m}$ with $4\leq m\leq 8$. Let $c_1, c_2,\ldots, c_{n}$ denote the consecutive cycle vertices of $C_n$, let $p_{i}$ denote the spur vertex adjacent to $c_{i}$, and let $o_{i,k}$, $1\leq k\leq m$, denote the outer vertices adjacent to $p_{i}$. For example, see the identification of vertices depicted in Figure~\ref{fig:UnlabeledC5P2S6}. For each case, it is straightforward and routine to verify that all adjacent vertices have relatively prime labels. However, for the reader's benefit, we will describe the case involving $m=6$ in detail.  Similar reasoning is required for each of the remaining cases.

For the $m=4$ case, the labeling function $f:V\to \{1,2, \ldots 6n\}$ is given by
\begin{align*}
f(c_{i}) &=6i-5, 1\leq i\leq n\\
f(p_{i}) &=6i-1, 1\leq i\leq n\\
f(o_{i,1}) &=6i-2, 1\leq i\leq n\\
f(o_{i,2}) &=6i-3, 1\leq i\leq n\\
f(o_{i,3}) &=6i-4, 1\leq i\leq n\\
f(o_{i,4}) &=6i  , 1\leq i\leq n.
\end{align*}
Figure~\ref{fig:CnP2S4} depicts the generalized labeling function in this case for $C_{n}\star P_{2}\star S_{4}$. The prime vertex labeling of $C_{4}\star P_{2}\star S_{4}$ using this labeling appears in Figure~\ref{fig:C4P2S4}.

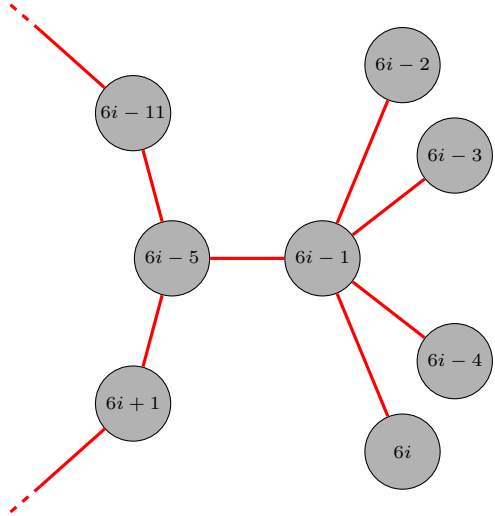
\begin{figure}
\begin{center}
\begin{tikzpicture}[scale=2,auto]
\tikzstyle{vert1} = [circle, draw, fill=grey,inner sep=0pt, minimum size=10mm]
\node (a) at (0:0) [vert1] {\scriptsize $6i-5$};
\node (b) at (0:1) [vert1] {\scriptsize $6i-1$};
\node (c) at (40:2) [vert1] {\scriptsize $6i-2$};
\node (d) at (20:2) [vert1] {\scriptsize $6i-3$};
\node (e) at (-20:2) [vert1] {\scriptsize $6i-4$};
\node (f) at (-40:2) [vert1] {\scriptsize $ 6i $};

\path[r] (a) to (b);
\path[r] (b) to (c);
\path[r] (b) to (e);
\path[r] (b) to (d);
\path[r] (b) to (f);

\node (a') at (105:1) [vert1] {\scriptsize $6i-11$};
\node (a'') at (-105:1) [vert1] {\scriptsize $6i+1$};

\path[r,dashed] (120:1.75) to (122.475:2);
\path[r,dashed] (-120:1.75) to (-122.475:2);

\path[r] (a') to (120:1.75);
\path[r] (a'') to (-120:1.75);
\path[r] (a) to (a');
\path[r] (a'') to (a);
\end{tikzpicture}
\end{center}
\caption{The generalized prime vertex labeling of $C_{n}\star P_{2}\star S_{4}$.}\label{fig:CnP2S4}
\end{figure}

\begin{figure}
\begin{center}
\begin{tikzpicture}[scale=1,auto]
\node (a) at (45:1) [vert] {\scriptsize $1$};
\node (b) at (45:2) [vert] {\scriptsize $5$};
\node (c) at (70:3) [vert] {\scriptsize $2$};
\node (d) at (55:3) [vert] {\scriptsize $3$};
\node (e) at (35:3) [vert] {\scriptsize $4$};
\node (f) at (20:3) [vert] {\scriptsize $6$};

\path[r] (a) to (b);
\path[r] (b) to (c);
\path[r] (b) to (e);
\path[r] (b) to (d);
\path[r] (b) to (f);

\begin{scope}[rotate=90]
\node (a') at (45:1) [vert] {\scriptsize $19$};
\node (b') at (45:2) [vert] {\scriptsize $23$};
\node (c') at (70:3) [vert] {\scriptsize $20$};
\node (d') at (55:3) [vert] {\scriptsize $21$};
\node (e') at (35:3) [vert] {\scriptsize $22$};
\node (f') at (20:3) [vert] {\scriptsize $24$};

\path[r] (a') to (b');
\path[r] (b') to (c');
\path[r] (b') to (e');
\path[r] (b') to (d');
\path[r] (b') to (f');
\end{scope}

\begin{scope}[rotate=-90]
\node (a'') at (45:1) [vert] {\scriptsize $7$};
\node (b'') at (45:2) [vert] {\scriptsize $11$};
\node (c'') at (70:3) [vert] {\scriptsize $8$};
\node (d'') at (55:3) [vert] {\scriptsize $9$};
\node (e'') at (35:3) [vert] {\scriptsize $10$};
\node (f'') at (20:3) [vert] {\scriptsize $12$};

\path[r] (a'') to (b'');
\path[r] (b'') to (c'');
\path[r] (b'') to (e'');
\path[r] (b'') to (d'');
\path[r] (b'') to (f'');
\end{scope}

\begin{scope}[rotate=180]
\node (a''') at (45:1) [vert] {\scriptsize $13$};
\node (b''') at (45:2) [vert] {\scriptsize $17$};
\node (c''') at (70:3) [vert] {\scriptsize $14$};
\node (d''') at (55:3) [vert] {\scriptsize $15$};
\node (e''') at (35:3) [vert] {\scriptsize $16$};
\node (f''') at (20:3) [vert] {\scriptsize $18$};

\path[r] (a''') to (b''');
\path[r] (b''') to (c''');
\path[r] (b''') to (e''');
\path[r] (b''') to (d''');
\path[r] (b''') to (f''');
\end{scope}

\path[r] (a) to (a');
\path[r] (a') to (a''');
\path[r] (a'') to (a);
\path[r] (a'') to (a''');
\end{tikzpicture}
\end{center}
\caption{A prime vertex labeling of $C_{4}\star P_{2}\star S_{4}$.}\label{fig:C4P2S4}
\end{figure}

For the $m=5$ case, the labeling function $f:V\to \{1,2, \ldots 7n\}$ is given by
\begin{align*}
f(c_{i}) &=7i-6, 1\leq i\leq n\\
f(p_{i}) &=\begin{cases}
		7i-2, i\equiv_{6}1, 3\\
    	7i-3, i\equiv_{6}2, 4\\
    	7i-4, i\equiv_{6}5\\
        7i-5, i\equiv_{6}0,i\not\equiv_{30}0\\
        7i-1, i\equiv_{30}0
	\end{cases}\\
f(o_{i,1}) &=\begin{cases}
			7i-5,i\not\equiv_{6}0$ or $i\equiv_{30}0\\
            7i-4,i\equiv_{6}0,i\not\equiv_{30}0
            \end{cases}\\
f(o_{i,2}) &=\begin{cases}
			7i-4,i\not\equiv_{6}0,5$ or $i\equiv_{30}0\\
            7i-3,i\equiv_{6}5$ or $i\equiv_{6}0,i\not\equiv_{30}0
            \end{cases}\\
f(o_{i,3}) &=\begin{cases}
			7i-3,i\not\equiv_{6}0,2,4,5$ or $i\equiv_{30}0\\
            7i-2,i\not\equiv_{6}1,3
            \end{cases}\\
f(o_{i,4}) &=\begin{cases}
			7i-2,i\equiv_{30}0\\
            7i-1,i\not\equiv_{30}0
            \end{cases}\\
f(o_{i,5}) &=7i, 1\leq i\leq n.
\end{align*}
For example, the prime vertex labeling of $C_{3}\star P_{2}\star S_{5}$ using this labeling appears in Figure~\ref{fig:C3P2S5}.

\begin{figure}
\begin{center}
\begin{tikzpicture}[scale=1,auto]

\node (a) at (90:1) [vert] {\scriptsize $1$};
\node (b) at (90:2) [vert] {\scriptsize $5$};
\node (c) at (110:3) [vert] {\scriptsize $2$};
\node (d) at (100:3) [vert] {\scriptsize $3$};
\node (e) at (90:3) [vert] {\scriptsize $4$};
\node (f) at (80:3) [vert] {\scriptsize $6$};
\node (g) at (70:3) [vert] {\scriptsize $7$};

\path[r] (a) to (b);
\path[r] (b) to (c);
\path[r] (b) to (e);
\path[r] (b) to (d);
\path[r] (b) to (f);
\path[r] (b) to (g);

\begin{scope}[rotate=-120]

\node (a') at (90:1) [vert] {\scriptsize $8$};
\node (b') at (90:2) [vert] {\scriptsize $11$};
\node (c') at (110:3) [vert] {\scriptsize $9$};
\node (d') at (100:3) [vert] {\scriptsize $10$};
\node (e') at (90:3) [vert] {\scriptsize $12$};
\node (f') at (80:3) [vert] {\scriptsize $13$};
\node (g') at (70:3) [vert] {\scriptsize $14$};

\path[r] (a') to (b');
\path[r] (b') to (c');
\path[r] (b') to (e');
\path[r] (b') to (d');
\path[r] (b') to (f');
\path[r] (b') to (g');
\end{scope}

\begin{scope}[rotate=120]
\node (a'') at (90:1) [vert] {\scriptsize $15$};
\node (b'') at (90:2) [vert] {\scriptsize $19$};
\node (c'') at (110:3) [vert] {\scriptsize $16$};
\node (d'') at (100:3) [vert] {\scriptsize $17$};
\node (e'') at (90:3) [vert] {\scriptsize $18$};
\node (f'') at (80:3) [vert] {\scriptsize $20$};
\node (g'') at (70:3) [vert] {\scriptsize $21$};

\path[r] (a'') to (b'');
\path[r] (b'') to (c'');
\path[r] (b'') to (e'');
\path[r] (b'') to (d'');
\path[r] (b'') to (f'');
\path[r] (b'') to (g'');
\end{scope}

\path[r] (a) to (a');
\path[r] (a) to (a'');
\path[r] (a') to (a'');

\end{tikzpicture}
\end{center}
\caption{A prime vertex labeling of $C_{3}\star P_{2}\star S_{5}$.}\label{fig:C3P2S5}
\end{figure}

For $m=6$, the labeling function $f:V\to \{1,2, \ldots 8n\}$ is given by
\begin{align*}
f(c_{i}) &=8i-7, 1\leq i\leq n\\
f(p_{i}) &=\begin{cases}
		8i-3, i\not\equiv_{3}0\\
        8i-5, i\equiv_{3}0,i\not\equiv_{15}0\\
        8i-1, i\equiv_{15}0
	\end{cases}\\
f(o_{i,1}) &=8i-6, 1\leq i\leq n\\
f(o_{i,2}) &=\begin{cases}
			8i-5,i\not\equiv_{3}0$ or $i\equiv_{15}0\\
            8i-3,i\equiv_{3}0, i\not\equiv_{15}0
            \end{cases}\\
f(o_{i,3}) &=8i-4, 1\leq i\leq n\\
f(o_{i,4}) &=\begin{cases}
			8i-3,i\equiv_{15}0\\
            8i-1,i\not\equiv_{15}0
            \end{cases}\\
f(o_{i,5}) &=8i-2, 1\leq i\leq n\\
f(o_{i,6}) &=8i, 1\leq i\leq n.
\end{align*}
As an example, the prime vertex labeling of $C_{5}\star P_{2}\star S_{6}$ using this labeling appears in Figure~\ref{fig:C5P2S6}. When $m=6$, the cycle vertex labels will be consecutive integers congruent to $1$ modulo $8$. So, each pair of adjacent labels are odd integers that differ by $8$ and hence are relatively prime. The remaining adjacencies to consider involve three cases. These cases refer to the three possible labelings for the spur vertices and within each case there are seven adjacencies to check.

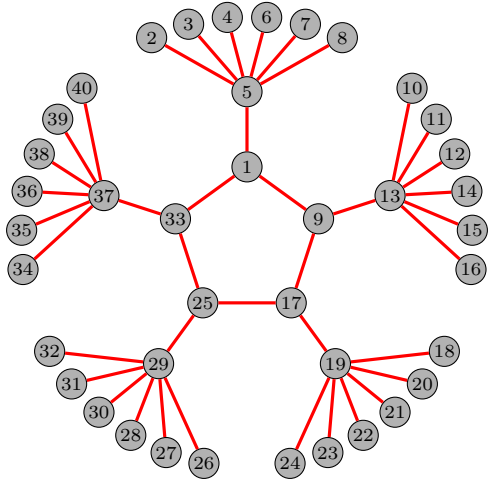
\begin{figure}
\begin{center}
\begin{tikzpicture}[scale=1,auto]

\node (a) at (90:1) [vert] {\scriptsize $1$};
\node (b) at (90:2) [vert] {\scriptsize $5$};
\node (c) at (115:3) [vert] {\scriptsize $2$};
\node (d) at (105:3) [vert] {\scriptsize $3$};
\node (e) at (95:3) [vert] {\scriptsize $4$};
\node (f) at (85:3) [vert] {\scriptsize $6$};
\node (g) at (75:3) [vert] {\scriptsize $7$};
\node (h) at (65:3) [vert] {\scriptsize $8$};

\path[r] (a) to (b);
\path[r] (b) to (c);
\path[r] (b) to (e);
\path[r] (b) to (d);
\path[r] (b) to (f);
\path[r] (b) to (g);
\path[r] (b) to (h);

\begin{scope}[rotate=-72]
\node (a') at (90:1) [vert] {\scriptsize $9$};
\node (b') at (90:2) [vert] {\scriptsize $13$};
\node (c') at (115:3) [vert] {\scriptsize $10$};
\node (d') at (105:3) [vert] {\scriptsize $11$};
\node (e') at (95:3) [vert] {\scriptsize $12$};
\node (f') at (85:3) [vert] {\scriptsize $14$};
\node (g') at (75:3) [vert] {\scriptsize $15$};
\node (h') at (65:3) [vert] {\scriptsize $16$};

\path[r] (a') to (b');
\path[r] (b') to (c');
\path[r] (b') to (e');
\path[r] (b') to (d');
\path[r] (b') to (f');
\path[r] (b') to (g');
\path[r] (b') to (h');
\end{scope}

\begin{scope}[rotate=-144]
\node (a'') at (90:1) [vert] {\scriptsize $17$};
\node (b'') at (90:2) [vert] {\scriptsize $19$};
\node (c'') at (115:3) [vert] {\scriptsize $18$};
\node (d'') at (105:3) [vert] {\scriptsize $20$};
\node (e'') at (95:3) [vert] {\scriptsize $21$};
\node (f'') at (85:3) [vert] {\scriptsize $22$};
\node (g'') at (75:3) [vert] {\scriptsize $23$};
\node (h'') at (65:3) [vert] {\scriptsize $24$};

\path[r] (a'') to (b'');
\path[r] (b'') to (c'');
\path[r] (b'') to (e'');
\path[r] (b'') to (d'');
\path[r] (b'') to (f'');
\path[r] (b'') to (g'');
\path[r] (b'') to (h'');
\end{scope}

\begin{scope}[rotate=144]
\node (a''') at (90:1) [vert] {\scriptsize $25$};
\node (b''') at (90:2) [vert] {\scriptsize $29$};
\node (c''') at (115:3) [vert] {\scriptsize $26$};
\node (d''') at (105:3) [vert] {\scriptsize $27$};
\node (e''') at (95:3) [vert] {\scriptsize $28$};
\node (f''') at (85:3) [vert] {\scriptsize $30$};
\node (g''') at (75:3) [vert] {\scriptsize $31$};
\node (h''') at (65:3) [vert] {\scriptsize $32$};

\path[r] (a''') to (b''');
\path[r] (b''') to (c''');
\path[r] (b''') to (e''');
\path[r] (b''') to (d''');
\path[r] (b''') to (f''');
\path[r] (b''') to (g''');
\path[r] (b''') to (h''');
\end{scope}

\begin{scope}[rotate=72]
\node (a'''') at (90:1) [vert] {\scriptsize $33$};
\node (b'''') at (90:2) [vert] {\scriptsize $37$};
\node (c'''') at (115:3) [vert] {\scriptsize $34$};
\node (d'''') at (105:3) [vert] {\scriptsize $35$};
\node (e'''') at (95:3) [vert] {\scriptsize $36$};
\node (f'''') at (85:3) [vert] {\scriptsize $38$};
\node (g'''') at (75:3) [vert] {\scriptsize $39$};
\node (h'''') at (65:3) [vert] {\scriptsize $40$};

\path[r] (a'''') to (b'''');
\path[r] (b'''') to (c'''');
\path[r] (b'''') to (e'''');
\path[r] (b'''') to (d'''');
\path[r] (b'''') to (f'''');
\path[r] (b'''') to (g'''');
\path[r] (b'''') to (h'''');
\end{scope}

\path[r] (a) to (a');
\path[r] (a') to (a'');
\path[r] (a'') to (a''');
\path[r] (a''') to (a'''');
\path[r] (a'''') to (a);

\end{tikzpicture}
\end{center}
\caption{A prime vertex labeling of $C_{5}\star P_{2}\star S_{6}$.}\label{fig:C5P2S6}
\end{figure}

\textbf{Case~1.} Assume $i\not\equiv_{3}0$, so that $f(p_{i})=8i-3$, and refer to Figure~\ref{fig:CnP2S6_Case1} for the corresponding labeling. Then
\begin{enumerate}
    \item $8i-7$ and $8i-3$ are odd integers that differ by $4$;
    \item $8i-3$ and $8i-6$ are not multiples of $3$ and differ by $3$;
    \item $8i-3$ and $8i-5$ are consecutive odd integers;
    \item $8i-3$ and $8i-4$ are consecutive integers;
    \item $8i-3$ and $8i-2$ are consecutive integers;
    \item $8i-3$ and $8i-1$ are consecutive odd integers;
    \item $8i-3$ and $8i$ are not multiples of $3$ and differ by $3$.
\end{enumerate}

\begin{figure}
\begin{center}
\begin{tikzpicture}[scale=2,auto]
\tikzstyle{vert1} = [circle, draw, fill=grey,inner sep=0pt, minimum size=10mm]
\node (a) at (0:0) [vert1] {\scriptsize $8i-7$};
\node (b) at (0:1) [vert1] {\scriptsize $8i-3$};
\node (c) at (60:2) [vert1] {\scriptsize $8i-6$};
\node (d) at (40:2) [vert1] {\scriptsize $8i-5$};
\node (e) at (20:2) [vert1] {\scriptsize $8i-4$};
\node (f) at (-20:2) [vert1] {\scriptsize $8i-2$};
\node (g) at (-40:2) [vert1] {\scriptsize $8i-1$};
\node (h) at (-60:2) [vert1] {\scriptsize $ 8i $};

\path[r] (a) to (b);
\path[r] (b) to (c);
\path[r] (b) to (e);
\path[r] (b) to (d);
\path[r] (b) to (f);
\path[r] (b) to (g);
\path[r] (b) to (h);

\node (a') at (105:1) [vert1] {\scriptsize $8i-15$};
\node (a'') at (-105:1) [vert1] {\scriptsize $8i+1$};

\path[r,dashed] (120:1.75) to (122.475:2);
\path[r,dashed] (-120:1.75) to (-122.475:2);

\path[r] (a') to (120:1.75);
\path[r] (a'') to (-120:1.75);
\path[r] (a) to (a');
\path[r] (a'') to (a);
\end{tikzpicture}
\end{center}

\caption{The generalized prime vertex labeling of $C_{n}\star P_{2}\star S_{6}$ for $i\not\equiv_{3}0$ (Case~1).}\label{fig:CnP2S6_Case1}
\end{figure}
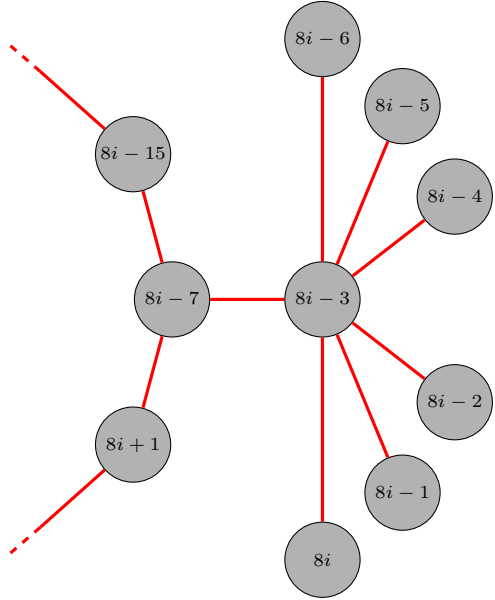

\textbf{Case~2.} Next, assume that $i\equiv_{3}$ and $i\not\equiv_{15}0$, so that $f(p_{i})=8i-5$, and refer to Figure~\ref{fig:CnP2S6_Case2} for the corresponding labeling. Then
\begin{enumerate}
    \item $8i-7$ and $8i-5$ are consecutive odd integers;
    \item $8i-5$ and $8i-6$ are consecutive integers;
    \item $8i-5$ and $8i-4$ are consecutive integers;
    \item $8i-5$ and $8i-3$ are consecutive odd integers;
    \item $8i-5$ and $8i-2$ are not multiples of $3$ and differ by $3$;
    \item $8i-5$ and $8i-1$ are odd integers that differ by $4$;
    \item $8i-5$ and $8i$ are not multiples of $5$ and differ by $5$.
\end{enumerate}

\begin{figure}
\begin{center}
\begin{tikzpicture}[scale=2,auto]
\tikzstyle{vert1} = [circle, draw, fill=grey,inner sep=0pt, minimum size=10mm]
\node (a) at (0:0) [vert1] {\scriptsize $8i-7$};
\node (b) at (0:1) [vert1] {\scriptsize $8i-5$};
\node (c) at (60:2) [vert1] {\scriptsize $8i-6$};
\node (d) at (40:2) [vert1] {\scriptsize $8i-4$};
\node (e) at (20:2) [vert1] {\scriptsize $8i-3$};
\node (f) at (-20:2) [vert1] {\scriptsize $8i-2$};
\node (g) at (-40:2) [vert1] {\scriptsize $8i-1$};
\node (h) at (-60:2) [vert1] {\scriptsize $ 8i $};

\path[r] (a) to (b);
\path[r] (b) to (c);
\path[r] (b) to (e);
\path[r] (b) to (d);
\path[r] (b) to (f);
\path[r] (b) to (g);
\path[r] (b) to (h);

\node (a') at (105:1) [vert1] {\scriptsize $8i-15$};
\node (a'') at (-105:1) [vert1] {\scriptsize $8i+1$};

\path[r,dashed] (120:1.75) to (122.475:2);
\path[r,dashed] (-120:1.75) to (-122.475:2);

\path[r] (a') to (120:1.75);
\path[r] (a'') to (-120:1.75);
\path[r] (a) to (a');
\path[r] (a'') to (a);
\end{tikzpicture}
\end{center}

\caption{The generalized prime vertex labeling of $C_{n}\star P_{2}\star S_{6}$ for $i\equiv_{3}0$ and $i\not\equiv_{15}0$ (Case~2).}\label{fig:CnP2S6_Case2}
\end{figure}
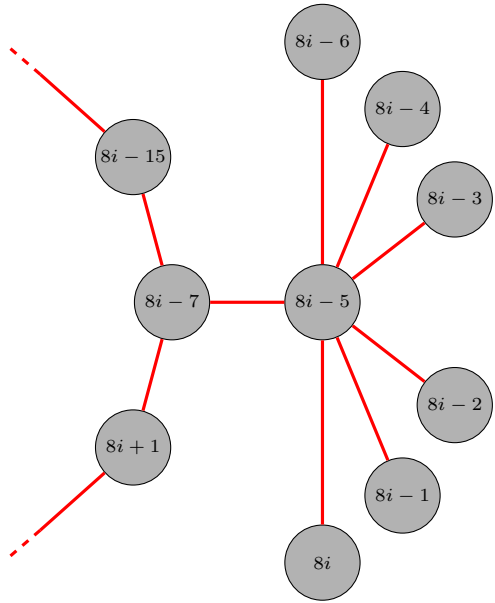

\textbf{Case~3.} Lastly, assume $i\equiv_{15}0$, so that $f(p_{i})=8i-1$, and refer to Figure~\ref{fig:CnP2S6_Case3} for the corresponding labeling. Then
\begin{enumerate}
\item $8i-7$ and $8i-1$ are odd integers that are not multiples of $3$ and differ by $6$;
\item $8i-1$ and $8i-6$ are not multiples of $5$ and differ by $5$;
\item $8i-1$ and $8i-5$ are odd integers that differ by $4$;
\item $8i-1$ and $8i-4$ are not multiples of $3$ and differ by $3$;
\item $8i-1$ and $8i-3$ are consecutive odd integers;
\item $8i-1$ and $8i-2$ are consecutive integers;
\item $8i-1$ and $8i$ are consecutive integers.
\end{enumerate}

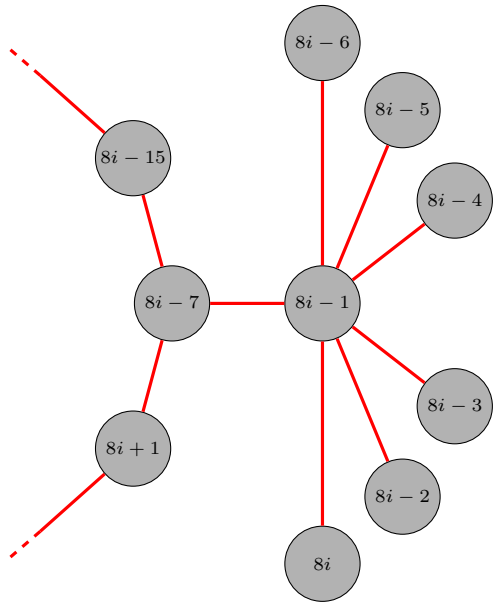
\begin{figure}
\begin{center}
\begin{tikzpicture}[scale=2,auto]
\tikzstyle{vert1} = [circle, draw, fill=grey,inner sep=0pt, minimum size=10mm]
\node (a) at (0:0) [vert1] {\scriptsize $8i-7$};
\node (b) at (0:1) [vert1] {\scriptsize $8i-1$};
\node (c) at (60:2) [vert1] {\scriptsize $8i-6$};
\node (d) at (40:2) [vert1] {\scriptsize $8i-5$};
\node (e) at (20:2) [vert1] {\scriptsize $8i-4$};
\node (f) at (-20:2) [vert1] {\scriptsize $8i-3$};
\node (g) at (-40:2) [vert1] {\scriptsize $8i-2$};
\node (h) at (-60:2) [vert1] {\scriptsize $ 8i $};

\path[r] (a) to (b);
\path[r] (b) to (c);
\path[r] (b) to (e);
\path[r] (b) to (d);
\path[r] (b) to (f);
\path[r] (b) to (g);
\path[r] (b) to (h);

\node (a') at (105:1) [vert1] {\scriptsize $8i-15$};
\node (a'') at (-105:1) [vert1] {\scriptsize $8i+1$};

\path[r,dashed] (120:1.75) to (122.475:2);
\path[r,dashed] (-120:1.75) to (-122.475:2);

\path[r] (a') to (120:1.75);
\path[r] (a'') to (-120:1.75);
\path[r] (a) to (a');
\path[r] (a'') to (a);
\end{tikzpicture}
\end{center}

\caption{The generalized prime vertex labeling of $C_{n}\star P_{2}\star S_{6}$ for $i\equiv_{15}0$ (Case~3).}\label{fig:CnP2S6_Case3}
\end{figure}

In all cases, adjacent labels are relatively prime, showing that $C_{n}\star P_{2}\star S_{6}$ is prime.

Next, for $m=7$, the labeling function $f:V\to \{1,2, \ldots, 9n\}$ is given by
\begin{align*}
f(c_{i}) &=9i-8, 1\leq i\leq n\\
f(p_{i}) &=\begin{cases}
		9i-4, i\equiv_{2}1\\
        9i-5, i\equiv_{2}0,i\not\equiv_{10}0\\
        9i-7, i\equiv_{10}0,i\not\equiv_{70}0\\
        9i-1, i\equiv_{70}0
	\end{cases}\\
f(o_{i,1}) &=\begin{cases}
			9i-7,i\not\equiv_{10}0$ or $i\equiv_{70}0\\
            9i-5,i\equiv_{10}0, i\not\equiv_{70}0
            \end{cases}\\
f(o_{i,2}) &=9i-6, 1\leq i\leq n\\
f(o_{i,3}) &=\begin{cases}
			9i-5,i\equiv_{2}1$ or $i\equiv_{70}0\\
            9i-4,i\equiv_{2}0, i\not\equiv_{70}0
            \end{cases}\\
f(o_{i,4}) &=\begin{cases}
			9i-4,i\equiv_{70}0\\
            9i-1,i\not\equiv_{70}0
            \end{cases}\\
f(o_{i,5}) &=9i-3, 1\leq i\leq n\\
f(o_{i,6}) &=9i-2, 1\leq i\leq n\\
f(o_{i,7}) &=9i  , 1\leq i\leq n.
\end{align*}
Figure~\ref{fig:C4P2S7} shows the prime vertex labeling of $C_{4}\star P_{2}\star S_{7}$ using this labeling.

\begin{figure}
\begin{center}
\begin{tikzpicture}[scale=1,auto]

\node (a) at (45:1) [vert] {\scriptsize $1$};
\node (b) at (45:2) [vert] {\scriptsize $5$};
\node (c) at (75:3) [vert] {\scriptsize $2$};
\node (d) at (65:3) [vert] {\scriptsize $3$};
\node (e) at (55:3) [vert] {\scriptsize $4$};
\node (f) at (45:3) [vert] {\scriptsize $6$};
\node (g) at (35:3) [vert] {\scriptsize $7$};
\node (h) at (25:3) [vert] {\scriptsize $8$};
\node (i) at (15:3) [vert] {\scriptsize $9$};

\path[r] (a) to (b);
\path[r] (b) to (c);
\path[r] (b) to (e);
\path[r] (b) to (d);
\path[r] (b) to (f);
\path[r] (b) to (g);
\path[r] (b) to (h);
\path[r] (b) to (i);

\begin{scope}[rotate=-90]
\node (a') at (45:1) [vert] {\scriptsize $10$};
\node (b') at (45:2) [vert] {\scriptsize $13$};
\node (c') at (75:3) [vert] {\scriptsize $11$};
\node (d') at (65:3) [vert] {\scriptsize $12$};
\node (e') at (55:3) [vert] {\scriptsize $14$};
\node (f') at (45:3) [vert] {\scriptsize $15$};
\node (g') at (35:3) [vert] {\scriptsize $16$};
\node (h') at (25:3) [vert] {\scriptsize $17$};
\node (i') at (15:3) [vert] {\scriptsize $18$};

\path[r] (a') to (b');
\path[r] (b') to (c');
\path[r] (b') to (e');
\path[r] (b') to (d');
\path[r] (b') to (f');
\path[r] (b') to (g');
\path[r] (b') to (h');
\path[r] (b') to (i');
\end{scope}

\begin{scope}[rotate=-180]
\node (a'') at (45:1) [vert] {\scriptsize $19$};
\node (b'') at (45:2) [vert] {\scriptsize $23$};
\node (c'') at (75:3) [vert] {\scriptsize $20$};
\node (d'') at (65:3) [vert] {\scriptsize $21$};
\node (e'') at (55:3) [vert] {\scriptsize $22$};
\node (f'') at (45:3) [vert] {\scriptsize $24$};
\node (g'') at (35:3) [vert] {\scriptsize $25$};
\node (h'') at (25:3) [vert] {\scriptsize $26$};
\node (i'') at (15:3) [vert] {\scriptsize $27$};

\path[r] (a'') to (b'');
\path[r] (b'') to (c'');
\path[r] (b'') to (e'');
\path[r] (b'') to (d'');
\path[r] (b'') to (f'');
\path[r] (b'') to (g'');
\path[r] (b'') to (h'');
\path[r] (b'') to (i'');
\end{scope}

\begin{scope}[rotate=90]
\node (a''') at (45:1) [vert] {\scriptsize $28$};
\node (b''') at (45:2) [vert] {\scriptsize $31$};
\node (c''') at (75:3) [vert] {\scriptsize $29$};
\node (d''') at (65:3) [vert] {\scriptsize $30$};
\node (e''') at (55:3) [vert] {\scriptsize $32$};
\node (f''') at (45:3) [vert] {\scriptsize $33$};
\node (g''') at (35:3) [vert] {\scriptsize $34$};
\node (h''') at (25:3) [vert] {\scriptsize $35$};
\node (i''') at (15:3) [vert] {\scriptsize $36$};

\path[r] (a''') to (b''');
\path[r] (b''') to (c''');
\path[r] (b''') to (e''');
\path[r] (b''') to (d''');
\path[r] (b''') to (f''');
\path[r] (b''') to (g''');
\path[r] (b''') to (h''');
\path[r] (b''') to (i''');
\end{scope}

\path[r] (a) to (a');
\path[r] (a') to (a'');
\path[r] (a'') to (a''');
\path[r] (a''') to (a);

\end{tikzpicture}
\end{center}
\caption{A prime vertex labeling of $C_{4}\star P_{2}\star S_{7}$.}\label{fig:C4P2S7}
\end{figure}

Finally, for the $m=8$ case, the labeling function $f:V\to \{1,2, \ldots 10n\}$ is given by
\begin{align*}
f(c_{i}) &=10i-9, 1\leq i\leq n\\
f(p_{i}) &=\begin{cases}
		10i-3, i\not\equiv_{3}0\\
    	10i-7, i\equiv_{3}0,i\not\equiv_{21}0\\
    	10i-1, i\equiv_{21}0
	\end{cases}\\
f(o_{i,1}) &=10i-8, 1\leq i\leq n\\
f(o_{i,2}) &=\begin{cases}
			10i-7,i\equiv_{21}0$ or $i\equiv_{3}1,2\\
            10i-3,i\equiv_{3}0,i\not\equiv_{21}0
            \end{cases}\\
f(o_{i,3}) &=10i-6, 1\leq i\leq n\\
f(o_{i,4}) &=10i-5, 1\leq i\leq n\\
f(o_{i,5}) &=10i-4, 1\leq i\leq n\\
f(o_{i,6}) &=\begin{cases}
			10i-3, i\equiv_{21}0\\
            10i-1, i\not\equiv_{21}0
            \end{cases}\\
f(o_{i,7}) &=10i-2, 1\leq i\leq n\\
f(o_{i,8}) &=10i  , 1\leq i\leq n.
\end{align*}
For example, the prime vertex labeling of $C_{3}\star P_{2}\star S_{8}$ using this labeling appears in Figure~\ref{fig:C3P2S8}.

\begin{figure}
\begin{center}
\begin{tikzpicture}[scale=1,auto]

\node (a) at (90:1) [vert] {\scriptsize $1$};
\node (b) at (90:2) [vert] {\scriptsize $7$};
\node (c) at (125:3) [vert] {\scriptsize $2$};
\node (d) at (115:3) [vert] {\scriptsize $3$};
\node (e) at (105:3) [vert] {\scriptsize $4$};
\node (f) at (95:3) [vert] {\scriptsize $5$};
\node (g) at (85:3) [vert] {\scriptsize $6$};
\node (h) at (75:3) [vert] {\scriptsize $8$};
\node (i) at (65:3) [vert] {\scriptsize $9$};
\node (j) at (55:3) [vert] {\scriptsize $10$};

\path[r] (a) to (b);
\path[r] (b) to (c);
\path[r] (b) to (e);
\path[r] (b) to (d);
\path[r] (b) to (f);
\path[r] (b) to (g);
\path[r] (b) to (h);
\path[r] (b) to (i);
\path[r] (b) to (j);

\begin{scope}[rotate=-120]
\node (a') at (90:1) [vert] {\scriptsize $11$};
\node (b') at (90:2) [vert] {\scriptsize $17$};
\node (c') at (125:3) [vert] {\scriptsize $12$};
\node (d') at (115:3) [vert] {\scriptsize $13$};
\node (e') at (105:3) [vert] {\scriptsize $14$};
\node (f') at (95:3) [vert] {\scriptsize $15$};
\node (g') at (85:3) [vert] {\scriptsize $16$};
\node (h') at (75:3) [vert] {\scriptsize $18$};
\node (i') at (65:3) [vert] {\scriptsize $19$};
\node (j') at (55:3) [vert] {\scriptsize $20$};

\path[r] (a') to (b');
\path[r] (b') to (c');
\path[r] (b') to (e');
\path[r] (b') to (d');
\path[r] (b') to (f');
\path[r] (b') to (g');
\path[r] (b') to (h');
\path[r] (b') to (i');
\path[r] (b') to (j');
\end{scope}

\begin{scope}[rotate=120]
\node (a'') at (90:1) [vert] {\scriptsize $21$};
\node (b'') at (90:2) [vert] {\scriptsize $23$};
\node (c'') at (125:3) [vert] {\scriptsize $22$};
\node (d'') at (115:3) [vert] {\scriptsize $24$};
\node (e'') at (105:3) [vert] {\scriptsize $25$};
\node (f'') at (95:3) [vert] {\scriptsize $26$};
\node (g'') at (85:3) [vert] {\scriptsize $27$};
\node (h'') at (75:3) [vert] {\scriptsize $28$};
\node (i'') at (65:3) [vert] {\scriptsize $29$};
\node (j'') at (55:3) [vert] {\scriptsize $30$};

\path[r] (a'') to (b'');
\path[r] (b'') to (c'');
\path[r] (b'') to (e'');
\path[r] (b'') to (d'');
\path[r] (b'') to (f'');
\path[r] (b'') to (g'');
\path[r] (b'') to (h'');
\path[r] (b'') to (i'');
\path[r] (b'') to (j'');
\end{scope}

\path[r] (a) to (a');
\path[r] (a') to (a'');
\path[r] (a'') to (a);

\end{tikzpicture}
\end{center}
\caption{A prime vertex labeling of $C_{3}\star P_{2}\star S_{8}$.}\label{fig:C3P2S8}
\end{figure}
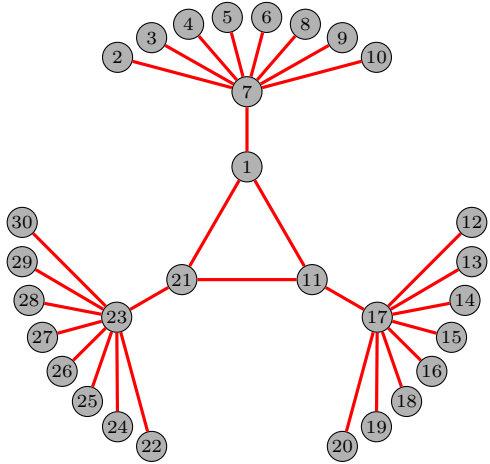

\end{proof}

Given the relative simplicity of the labelings we found for the graphs in Theorem~\ref{thm:CnP2Sm}, it might be surprising that determining prime vertex labelings for $C_{n}\star P_{2}\star S_{m}$ with $m\geq 9$ appears to be more difficult.  Consistent with Seoud and Youssef's conjecture, we expect that all cycle pendant stars are prime.


\section{Cycle Chains}\label{sec:cyclechains}


The ``gluing together" of identical cycles appears in various guises in the literature.  But the construction of chains of cycles, with adjacent cycles sharing a single common vertex, is not prevalent. For this reason, we require the following definition, where we assume that $n$ is even. The graph $\mathcal{C}_n^2$ results from attaching two $n$-cycles together at a single shared vertex. Continuing in this manner, we define $\mathcal{C}_n^3$ by attaching a third $n$-cycle to one of the $n$-cycles of $\mathcal{C}_n^2$ in a similar uniform manner so that the cycle containing two shared vertices consists of two identical $\frac{n}{2}$-paths. Recursively, the graph $\mathcal{C}_n^m$ consists of a ``chain" of $m$ consecutive $n$-cycles.  We refer to each of the graphs in this family as a \emph{cycle chain}. For example, the cycle chain $\mathcal{C}_6^5$ consisting of 5 consecutive $6$-cycles is shown in Figure~\ref{fig:C6,5x}.


\begin{figure}
\begin{center}
\begin{tikzpicture}

\begin{scope}{rotate=0}

\node (1) at (0:1) [vert] {\scriptsize};

\begin{scope}[rotate=60]
\node (2) at (0:1) [vert] {\scriptsize};
\end{scope}

\begin{scope}[rotate=120]
\node (3) at (0:1) [vert] {\scriptsize};
\end{scope}

\begin{scope}[rotate=180]
\node (4) at (0:1) [vert] {\scriptsize};
\end{scope}

\begin{scope}[rotate=240]
\node (5) at (0:1) [vert] {\scriptsize};
\end{scope}

\begin{scope}[rotate=300]
\node (6) at (0:1) [vert] {\scriptsize};
\end{scope}

\path[r] (1) to (2);
\path[r] (2) to (3);
\path[r] (3) to (4);
\path[r] (4) to (5);
\path[r] (5) to (6);
\path[r] (6) to (1);

\end{scope}


\begin{scope}[shift={(2,0)}]
\begin{scope}{rotate=0}

\node (1') at (0:1) [vert] {\scriptsize};

\begin{scope}[rotate=60]
\node (2') at (0:1) [vert] {\scriptsize};
\end{scope}

\begin{scope}[rotate=120]
\node (3') at (0:1) [vert] {\scriptsize};
\end{scope}

\begin{scope}[rotate=180]
\node (4') at (0:1) [vert] {\scriptsize};
\end{scope}

\begin{scope}[rotate=240]
\node (5') at (0:1) [vert] {\scriptsize};
\end{scope}

\begin{scope}[rotate=300]
\node (6') at (0:1) [vert] {\scriptsize};
\end{scope}

\path[r] (1') to (2');
\path[r] (2') to (3');
\path[r] (3') to (4');
\path[r] (4') to (5');
\path[r] (5') to (6');
\path[r] (6') to (1');

\end{scope}
\end{scope}


\begin{scope}[shift={(4,0)}]
\begin{scope}{rotate=0}

\node (1'') at (0:1) [vert] {\scriptsize};

\begin{scope}[rotate=60]
\node (2'') at (0:1) [vert] {\scriptsize};
\end{scope}

\begin{scope}[rotate=120]
\node (3'') at (0:1) [vert] {\scriptsize};
\end{scope}

\begin{scope}[rotate=180]
\node (4'') at (0:1) [vert] {\scriptsize};
\end{scope}

\begin{scope}[rotate=240]
\node (5'') at (0:1) [vert] {\scriptsize};
\end{scope}

\begin{scope}[rotate=300]
\node (6'') at (0:1) [vert] {\scriptsize};
\end{scope}

\path[r] (1'') to (2'');
\path[r] (2'') to (3'');
\path[r] (3'') to (4'');
\path[r] (4'') to (5'');
\path[r] (5'') to (6'');
\path[r] (6'') to (1'');

\end{scope}
\end{scope}


\begin{scope}[shift={(6,0)}]
\begin{scope}{rotate=0}

\node (1''') at (0:1) [vert] {\scriptsize};

\begin{scope}[rotate=60]
\node (2''') at (0:1) [vert] {\scriptsize};
\end{scope}

\begin{scope}[rotate=120]
\node (3''') at (0:1) [vert] {\scriptsize};
\end{scope}

\begin{scope}[rotate=180]
\node (4''') at (0:1) [vert] {\scriptsize};
\end{scope}

\begin{scope}[rotate=240]
\node (5''') at (0:1) [vert] {\scriptsize};
\end{scope}

\begin{scope}[rotate=300]
\node (6''') at (0:1) [vert] {\scriptsize};
\end{scope}

\path[r] (1''') to (2''');
\path[r] (2''') to (3''');
\path[r] (3''') to (4''');
\path[r] (4''') to (5''');
\path[r] (5''') to (6''');
\path[r] (6''') to (1''');

\end{scope}
\end{scope}


\begin{scope}[shift={(8,0)}]
\begin{scope}{rotate=0}

\node (1'''') at (0:1) [vert] {\scriptsize};

\begin{scope}[rotate=60]
\node (2'''') at (0:1) [vert] {\scriptsize};
\end{scope}

\begin{scope}[rotate=120]
\node (3'''') at (0:1) [vert] {\scriptsize};
\end{scope}

\begin{scope}[rotate=180]
\node (4'''') at (0:1) [vert] {\scriptsize};
\end{scope}

\begin{scope}[rotate=240]
\node (5'''') at (0:1) [vert] {\scriptsize};
\end{scope}

\begin{scope}[rotate=300]
\node (6'''') at (0:1) [vert] {\scriptsize};
\end{scope}

\path[r] (1'''') to (2'''');
\path[r] (2'''') to (3'''');
\path[r] (3'''') to (4'''');
\path[r] (4'''') to (5'''');
\path[r] (5'''') to (6'''');
\path[r] (6'''') to (1'''');

\end{scope}
\end{scope}

\end{tikzpicture}
\end{center}
\caption{An example of the cycle chain $\mathcal{C}_{6}^{5}$.}\label{fig:C6,5x}
\end{figure}
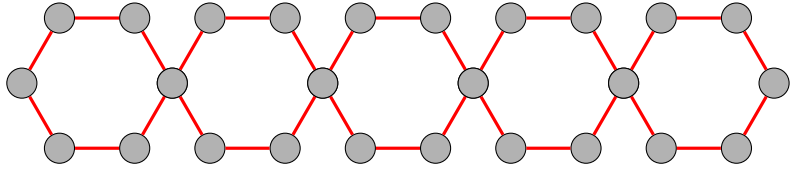


In what follows, we will show that each $\mathcal{C}_{n}^{m}$ is prime for $n=4,6,8$ and all $m$.  The labeling functions involved are all similar and relatively simple, which is a consequence of the vertex identification employed on each family.  We also show that one of these labeling schemes generalizes in an obvious way, providing a prime vertex labeling for a specific family of cycle chains associated with Mersenne primes.

\begin{theorem}\label{thm:C4m}
All $\mathcal{C}_{4}^{m}$ are prime.
\end{theorem}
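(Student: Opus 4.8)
The plan is to work with the bipartite structure of $\mathcal{C}_4^m$. Write $s_0,s_1,\dots,s_m$ for the $m-1$ shared vertices together with the two free endpoints, listed consecutively, so that the $j$th square has $s_{j-1}$ and $s_j$ as its two diagonally opposite vertices; its other two vertices $t_j,b_j$ are each adjacent precisely to $s_{j-1}$ and $s_j$. Then $\mathcal{C}_4^m$ has $3m+1$ vertices, the $s_j$ form one part of a bipartition and the $t_j,b_j$ the other, and every edge runs between the parts. The design is to give the odd labels $1,3,\dots,2m+1$ to the spine and the even labels together with the labels $2m+2,\dots,3m+1$ to the $t_j,b_j$; since no edge joins two spine vertices, an all-odd spine rules out any conflict at the prime $2$.

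With the spine labeled by odds in an order still to be chosen, I would set $f(t_j)=2j$ for $1\le j\le m$, leaving exactly the labels $2m+2,\dots,3m+1$ to be matched to $b_1,\dots,b_m$. After this, the only edge conditions that remain are $\gcd(2j,f(s_{j-1}))=\gcd(2j,f(s_j))=1$ and $\gcd(f(b_j),f(s_{j-1}))=\gcd(f(b_j),f(s_j))=1$ for each $j$ (the edges $s_{j-1}s_j$ are not edges of the graph, so no parity clash arises there). Hence the whole theorem reduces to choosing simultaneously an ordering of $\{1,3,\dots,2m+1\}$ along the spine and a bijection of $\{2m+2,\dots,3m+1\}$ with the squares, so that in each square the two values $2j$ and $f(b_j)$ are each coprime to the two odd spine labels bounding that square.

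I expect this joint choice to be the heart of the matter, and the small odd primes — chiefly $3$ and $5$ — to be the obstacle: for example $15$, when it lies in $[2m+2,3m+1]$, can be used only in a square whose two spine labels avoid both $3$ and $5$, which forces the multiples of $3$ and of $5$ among $\{1,3,\dots,2m+1\}$ to be positioned carefully rather than in naive increasing order. Since any composite in $[2m+2,3m+1]$ has a prime factor at most $\sqrt{3m+1}$, only finitely many primes can ever interfere, and a short counting argument shows the required spine ordering and matching always exist; in practice this should yield an explicit labeling defined by cases on $m$ (or on $j$) modulo a small integer, in the spirit of the proof of Theorem~\ref{thm:CnP2Sm}, with a handful of small values of $m$ (such as $\mathcal{C}_4^1=C_4$ and $\mathcal{C}_4^2$) treated directly. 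Once the formula is in hand, verifying the finitely many coprimality conditions on the edges is routine.
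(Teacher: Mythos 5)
Your structural reduction is sound—$\mathcal{C}_4^m$ is indeed bipartite, with the $m+1$ diagonal ``spine'' vertices $s_0,\dots,s_m$ on one side and the $2m$ vertices $t_j,b_j$ on the other, and putting all the odd labels $1,3,\dots,2m+1$ on the spine does eliminate every conflict at the prime $2$. But the proof stops exactly where the theorem lives. You write that ``a short counting argument shows the required spine ordering and matching always exist,'' yet no such argument is given, and it is not routine: the constraints interlock across squares (the spine label at $s_j$ must be compatible with $2j$, $2(j+1)$, $f(b_j)$, and $f(b_{j+1})$ simultaneously), and the primes that can interfere are not uniformly bounded in $m$—for the even labels $2j$ any odd prime up to about $m$ can divide both $j$ and a candidate spine label, so the difficulty of the simultaneous choice grows with $m$. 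Asserting that an explicit case-by-case formula ``should'' emerge is a plan, not a proof; as it stands the central existence claim is unsubstantiated, so there is a genuine gap.

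For comparison, the paper avoids any matching or counting argument by giving a single explicit labeling: the squares are labeled with consecutive blocks, the first square receiving $2,3,4,5$ and the $i$th square ($i\geq 2$) the three new labels $3i,3i+1,3i+2$, arranged so that the shared (spine) vertices receive $5,7,11,13,\dots$ (the values congruent to $\pm 1$ modulo $6$) and so that every edge joins labels differing by $1$, by $2$ (both odd), or by $3$ (neither a multiple of $3$); the leftover label $1$ is placed on the last square. If you want to salvage your approach, you would need to either exhibit such an explicit spine ordering and assignment of $\{2m+2,\dots,3m+1\}$ and verify the finitely many congruence conditions, or prove existence honestly (e.g., by verifying a Hall-type condition), neither of which your proposal does.
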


\begin{proof}
The vertices of $\mathcal{C}_{4}^{m}$ are identified as follows. First, the vertices of $C_1$ are identified clockwise as $c_{1,1}, c_{1,2}, c_{1,3}, c_{1,4}$ where $c_{1,4}$ is the vertex also belonging to $C_2$. The remaining vertex identifications are based on the parity of $i$.

If $i$ is even, the vertices $c_{i,1}, c_{i,2}, c_{i,3},  c_{i,4}$ are identified clockwise from the vertex that is clockwise adjacent to the common vertex of $C_i$ and $C_{i-1}$. If $i$ is odd and greater than 1, the vertices $c_{i,1}$ and $c_{i,2}$ are identified clockwise from the vertex that is clockwise adjacent to the common vertex of $C_i$ and $C_{i-1}$, and the vertices $c_{i,3}$ and $c_{i,4}$ are identified counterclockwise from the vertex that is counterclockwise adjacent to the common vertex of $C_i$ and $C_{i-1}$. Note that in both cases, $c_{i,4}$ is the common vertex of $C_i$ and $C_{i+1}$.

The graph $\mathcal{C}_{4}^{m}$ can now be prime labeled via
\begin{align*}
f(c_{i,k}) &=\begin{cases}
		k+1, & i=1, 1 \leq k \leq 4\\
        3i+k-1, & 2 \leq i \leq m, 1 \leq k \leq 3  \text{ except } i=m \text{ and } k=3\\
		1, & i=m, k=3
        \end{cases}.
\end{align*}
It is straightforward to verify that all adjacent vertices receive relatively prime labels.
\end{proof}

In Figure~\ref{fig:C4,5}, we see two labeled examples of $\mathcal{C}_{4}^{m}$, one for each of the possible locations of the integer 1, the last vertex label to be assigned.


\begin{figure}
\begin{center}
\begin{tikzpicture}

\begin{scope}{rotate=0}

\node (1) at (0:1) [vert4] {\scriptsize 5};

\begin{scope}[rotate=90]
\node (2) at (0:1) [vert4] {\scriptsize 4};
\end{scope}

\begin{scope}[rotate=180]
\node (3) at (0:1) [vert4] {\scriptsize 3};
\end{scope}

\begin{scope}[rotate=270]
\node (4) at (0:1) [vert4] {\scriptsize 2};
\end{scope}

\path[r] (1) to (2);
\path[r] (2) to (3);
\path[r] (3) to (4);
\path[r] (4) to (1);

\end{scope}


\begin{scope}[shift={(2,0)}]
\begin{scope}{rotate=0}

\node (1') at (0:1) [vert5] {\scriptsize 7};

\begin{scope}[rotate=90]
\node (2') at (0:1) [vert5] {\scriptsize 6};
\end{scope}

\begin{scope}[rotate=180]
\node (3') at (0:1) [vert4] {\scriptsize 5};
\end{scope}

\begin{scope}[rotate=270]
\node (4') at (0:1) [vert5] {\scriptsize 8};
\end{scope}

\path[r] (1') to (2');
\path[r] (2') to (3');
\path[r] (3') to (4');
\path[r] (4') to (1');

\end{scope}
\end{scope}


\begin{scope}[shift={(4,0)}]
\begin{scope}{rotate=0}

\node (1'') at (0:1) [vert3] {\scriptsize 11};

\begin{scope}[rotate=90]
\node (2'') at (0:1) [vert2] {\scriptsize 9};
\end{scope}

\begin{scope}[rotate=180]
\node (3'') at (0:1) [vert5] {\scriptsize 7};
\end{scope}

\begin{scope}[rotate=270]
\node (4'') at (0:1) [vert3] {\scriptsize 10};
\end{scope}

\path[r] (1'') to (2'');
\path[r] (2'') to (3'');
\path[r] (3'') to (4'');
\path[r] (4'') to (1'');

\end{scope}
\end{scope}


\begin{scope}[shift={(6,0)}]
\begin{scope}{rotate=0}

\node (1''') at (0:1) [vert5] {\scriptsize 13};

\begin{scope}[rotate=90]
\node (2''') at (0:1) [vert5] {\scriptsize 12};
\end{scope}

\begin{scope}[rotate=180]
\node (3''') at (0:1) [vert3] {\scriptsize 11};
\end{scope}

\begin{scope}[rotate=270]
\node (4''') at (0:1) [vert4] {\scriptsize 1};
\end{scope}

\path[r] (1''') to (2''');
\path[r] (2''') to (3''');
\path[r] (3''') to (4''');
\path[r] (4''') to (1''');

\end{scope}
\end{scope}

\end{tikzpicture}
\end{center}


\begin{center}
\begin{tikzpicture}

\begin{scope}{rotate=0}

\node (1) at (0:1) [vert4] {\scriptsize 5};

\begin{scope}[rotate=90]
\node (2) at (0:1) [vert4] {\scriptsize 4};
\end{scope}

\begin{scope}[rotate=180]
\node (3) at (0:1) [vert4] {\scriptsize 3};
\end{scope}

\begin{scope}[rotate=270]
\node (4) at (0:1) [vert4] {\scriptsize 2};
\end{scope}

\path[r] (1) to (2);
\path[r] (2) to (3);
\path[r] (3) to (4);
\path[r] (4) to (1);

\end{scope}


\begin{scope}[shift={(2,0)}]
\begin{scope}{rotate=0}

\node (1') at (0:1) [vert5] {\scriptsize 7};

\begin{scope}[rotate=90]
\node (2') at (0:1) [vert5] {\scriptsize 6};
\end{scope}

\begin{scope}[rotate=180]
\node (3') at (0:1) [vert4] {\scriptsize 5};
\end{scope}

\begin{scope}[rotate=270]
\node (4') at (0:1) [vert5] {\scriptsize 8};
\end{scope}

\path[r] (1') to (2');
\path[r] (2') to (3');
\path[r] (3') to (4');
\path[r] (4') to (1');

\end{scope}
\end{scope}


\begin{scope}[shift={(4,0)}]
\begin{scope}{rotate=0}

\node (1'') at (0:1) [vert3] {\scriptsize 11};

\begin{scope}[rotate=90]
\node (2'') at (0:1) [vert2] {\scriptsize 9};
\end{scope}

\begin{scope}[rotate=180]
\node (3'') at (0:1) [vert5] {\scriptsize 7};
\end{scope}

\begin{scope}[rotate=270]
\node (4'') at (0:1) [vert3] {\scriptsize 10};
\end{scope}

\path[r] (1'') to (2'');
\path[r] (2'') to (3'');
\path[r] (3'') to (4'');
\path[r] (4'') to (1'');

\end{scope}
\end{scope}


\begin{scope}[shift={(6,0)}]
\begin{scope}{rotate=0}

\node (1''') at (0:1) [vert5] {\scriptsize 13};

\begin{scope}[rotate=90]
\node (2''') at (0:1) [vert5] {\scriptsize 12};
\end{scope}

\begin{scope}[rotate=180]
\node (3''') at (0:1) [vert3] {\scriptsize 11};
\end{scope}

\begin{scope}[rotate=270]
\node (4''') at (0:1) [vert5] {\scriptsize 14};
\end{scope}

\path[r] (1''') to (2''');
\path[r] (2''') to (3''');
\path[r] (3''') to (4''');
\path[r] (4''') to (1''');

\end{scope}
\end{scope}


\begin{scope}[shift={(8,0)}]
\begin{scope}{rotate=0}

\node (1'''') at (0:1) [vert4] {\scriptsize 1};

\begin{scope}[rotate=90]
\node (2'''') at (0:1) [vert2] {\scriptsize 15};
\end{scope}

\begin{scope}[rotate=180]
\node (3'''') at (0:1) [vert5] {\scriptsize 13};
\end{scope}

\begin{scope}[rotate=270]
\node (4'''') at (0:1) [vert3] {\scriptsize 16};
\end{scope}

\path[r] (1'''') to (2'''');
\path[r] (2'''') to (3'''');
\path[r] (3'''') to (4'''');
\path[r] (4'''') to (1'''');

\end{scope}
\end{scope}

\end{tikzpicture}
\end{center}
\caption{A prime vertex labeling of $\mathcal{C}_{4}^{4}$ and $\mathcal{C}_{4}^{5}$.}\label{fig:C4,5}
\end{figure}
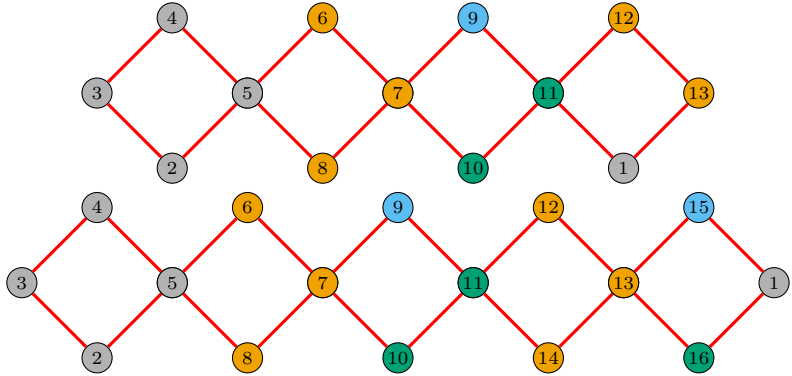

\begin{theorem}\label{thm:C6m}
All $\mathcal{C}_{6}^{m}$ are prime.
\end{theorem}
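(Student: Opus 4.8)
The plan is to imitate the proof of Theorem~\ref{thm:C4m}, with hexagons in place of squares. First note that $\mathcal{C}_6^m$ has $6m-(m-1)=5m+1$ vertices, that $C_1$ accounts for six of them, and that each later hexagon $C_i$ contributes exactly five new vertices. I would fix a vertex identification of the same flavour as in Theorem~\ref{thm:C4m}: name the vertices of $C_1$ cyclically $c_{1,1},\dots,c_{1,6}$ with $c_{1,6}$ the vertex shared with $C_2$, and for $2\le i\le m$ name the five new vertices of $C_i$ by reading around the hexagon from a neighbour of the vertex $C_i$ shares with $C_{i-1}$, using a convention depending on the parity of $i$ (for $i$ odd reading out along one of the two length-$3$ arcs and back along the other, exactly as in the $\mathcal{C}_4^m$ argument), so that consecutively named vertices are adjacent and $c_{i,3}$ --- the antipode of the incoming shared vertex --- is always the vertex $C_i$ shares with $C_{i+1}$. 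The intended labeling $f\colon V\to\{1,\dots,5m+1\}$ gives $C_1$ the block $\{2,3,4,5,6,7\}$ and gives the new vertices of each $C_i$ with $2\le i\le m$ the block $\{5i-2,5i-1,5i,5i+1,5i+2\}$, with the single modification that, since the nominal last block would overshoot to $5m+2$, the vertex that would receive $5m+2$ instead receives $1$; the base case $\mathcal{C}_6^1=C_6$ is already known to be prime.

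With this set-up the five new labels of each hexagon occupy the path $c_{i,1}$--$c_{i,2}$--$c_{i,3}$--$c_{i,4}$--$c_{i,5}$, so if they are placed in increasing order almost every edge joins consecutive integers and needs no argument. The only edges that carry content are the four incident to each degree-$4$ vertex, i.e.\ to each vertex $s_i$ shared by $C_i$ and $C_{i+1}$: its label is forced by the labeling of $C_i$, and it must be coprime to its two neighbours inside $C_i$ (harmless, since those differ from it by $1$ or $2$) and to its two neighbours inside $C_{i+1}$, whose labels lie in the next block $\{5i+3,\dots,5i+7\}$. Thus the proof reduces to choosing, for each $i$, which element of the block $\{5i-2,\dots,5i+2\}$ is placed on $s_i$ and which two elements of the next block are its cross-cycle neighbours so that all these pairs are coprime, and then expressing $f(c_{i,k})$ in closed form with a handful of modular sub-cases, precisely the device used for the generalized books in Theorem~\ref{thm:CnP2Sm}.

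The main obstacle is this coherent choice of the shared labels $s_1,s_2,\dots$ along the whole chain. Putting the five labels of a hexagon in increasing order makes $s_i$ the middle label $5i$, and then coprimality fails for $\gcd(5i,5i+3)$ whenever $3\mid i$ and for $\gcd(5i,5i+7)$ whenever $7\mid i$; worse, when $5i$ is smooth --- for instance $i=6$, $s_i=30$ --- the next block $\{33,\dots,37\}$ contains only one element coprime to $s_i$, so $s_i$ cannot even be given two legal neighbours. Hence the antipode cannot always carry the middle label: one instead takes $s_i$ to be an element of $\{5i-2,\dots,5i+2\}$ that is coprime to $6$ (such an element always exists) and as close as possible to the next block, assigning its two neighbours in $C_{i+1}$ to elements of $\{5i+3,\dots,5i+7\}$ at distance at most $5$ from $s_i$; the only primes that could then still be shared are $5$ and $7$, and these are dispatched by treating a short list of residues of $i$ modulo $6$, $30$, and $42$ separately. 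Once the correct case split for $f$ is pinned down, every remaining adjacency is of the routine ``differ by $1$'', ``consecutive odd integers'', ``not both multiples of $3$ (or $5$ or $7$)'' type appearing in Cases~1--3 of the proof of Theorem~\ref{thm:CnP2Sm}, and --- placing the label $1$ on the last hexagon being harmless --- $\mathcal{C}_6^m$ is prime.
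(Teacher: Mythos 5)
Your set-up is correct --- $\mathcal{C}_6^m$ has $5m+1$ vertices, each hexagon after the first contributes a path of five new vertices whose middle vertex (the antipode of the incoming shared vertex) is the next shared vertex, and blocks of five consecutive labels per hexagon are the right currency --- and you correctly diagnose the central difficulty: the shared label must be coprime to its path-neighbours in its own block \emph{and} to two labels of the next block, and the naive increasing placement (shared label $5i$) fails, e.g.\ at $i=6$. But at exactly that point the proposal stops being a proof. You never produce the labeling function: the claim that taking $s_i$ coprime to $6$, ``as close as possible to the next block,'' and splitting on residues of $i$ modulo $6$, $30$, and $42$ will succeed is asserted, not carried out, and that is precisely the content of the theorem. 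Moreover the choices interact along the chain: the two elements of block $i+1$ assigned as neighbours of $s_i$ are the endpoints of the next path, while $s_{i+1}$ is forced to sit at its middle, so the permutation chosen for block $i+1$ must simultaneously suit $s_i$, suit $s_{i+1}$ against block $i+2$, and keep all four internal path adjacencies coprime (these are no longer consecutive integers once the block is permuted), with a further check where the label $1$ replaces $5m+2$ at the end. Without an explicit formula and a verification of these pairs --- the analogue of Cases~1--3 in the proof of Theorem~\ref{thm:CnP2Sm} --- the argument is a plan, not a proof.

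It is also worth noting that the paper sidesteps the elaborate case analysis you anticipate. It labels $C_1$ with $1,\ldots,6$, placing $1$ on the vertex shared with $C_2$ (rather than saving $1$ for the last hexagon as in the $\mathcal{C}_4^m$ proof), and labels the five new vertices of $C_i$, $i>1$, with $5i-3,\ldots,5i+1$, reversing the direction in which they are read around the hexagon according to $i \bmod 3$. This puts $5i+1$ on the outgoing shared vertex when $i\equiv_{3}0,2$ and $5i-1$ when $i\equiv_{3}1$, after which every adjacent pair of labels differs by $1$, $3$, or $5$ with the dangerous residues excluded by the congruence class of $i$ --- a short verification, with no cases modulo $30$ or $42$. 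If you wish to salvage your route, you must write down the permutation of each block explicitly and check it; as it stands the key construction is missing.
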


\begin{proof}
The vertices of $\mathcal{C}_{6}^{m}$ are identified as follows. First, the vertices of ${C}_{1}$ are identified counterclockwise as $c_{1,1}, c_{1,2}, \ldots, c_{1,6}$ where $c_{1,1}$ is the vertex also belonging to $C_2$. The remaining vertex identifications are based on the congruence class modulo 3 to which $i$ belongs.

If $i$ $\equiv_{3} 0,2$, the vertices $c_{i,1}$ and $c_{i,2}$ are identified clockwise from the vertex that is clockwise adjacent to the common vertex of  $C_i$ and $C_{i-1}$, and the vertices $c_{i,3}, c_{i,4}, c_{i,5}$ are identified counterclockwise from the vertex that is counterclockwise adjacent to the common vertex of $C_i$ and $C_{i-1}$. If $i\equiv_{3} 1$, the vertices $c_{i,1}, c_{i,2}, \ldots, c_{i,5}$ are identified clockwise from the vertex that is clockwise adjacent to the common vertex of $C_i$ and $C_{i-1}$. Note that in both cases, $c_{i,5}$ is the common vertex of $C_i$ and $C_{i+1}$.

The graph of of $\mathcal{C}_{6}^{m}$ is then labeled using the function given by
\begin{align*}
f(c_{i,k}) &=\begin{cases}
		k, & i=1, 1 \leq k \leq 6\\
        5i+k-4, & i>1, 1 \leq k \leq 5
        \end{cases}.
\end{align*}
Again, it is straightforward to verify that all adjacent vertices receive relatively prime labels, and the result follows.
\end{proof}

An example of a prime vertex labeling of $\mathcal{C}_{6}^{5}$ using Theorem~\ref{thm:C6m} appears in Figure~\ref{fig:C6,5}.

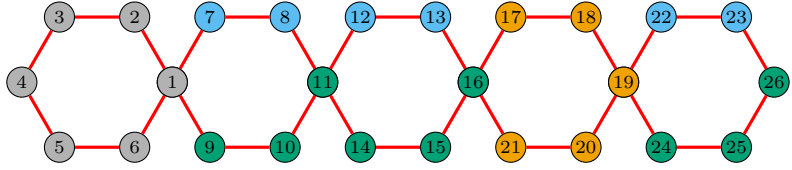
\begin{figure}
\begin{center}
\begin{tikzpicture}

\begin{scope}{rotate=0}

\node (1) at (0:1) [vert4] {\scriptsize 1};

\begin{scope}[rotate=60]
\node (2) at (0:1) [vert4] {\scriptsize 2};
\end{scope}

\begin{scope}[rotate=120]
\node (3) at (0:1) [vert4] {\scriptsize 3};
\end{scope}

\begin{scope}[rotate=180]
\node (4) at (0:1) [vert4] {\scriptsize 4};
\end{scope}

\begin{scope}[rotate=240]
\node (5) at (0:1) [vert4] {\scriptsize 5};
\end{scope}

\begin{scope}[rotate=300]
\node (6) at (0:1) [vert4] {\scriptsize 6};
\end{scope}

\path[r] (1) to (2);
\path[r] (2) to (3);
\path[r] (3) to (4);
\path[r] (4) to (5);
\path[r] (5) to (6);
\path[r] (6) to (1);

\end{scope}


\begin{scope}[shift={(2,0)}]
\begin{scope}{rotate=0}

\node (1') at (0:1) [vert3] {\scriptsize 11};

\begin{scope}[rotate=60]
\node (2') at (0:1) [vert2] {\scriptsize 8};
\end{scope}

\begin{scope}[rotate=120]
\node (3') at (0:1) [vert2] {\scriptsize 7};
\end{scope}

\begin{scope}[rotate=180]
\node (4') at (0:1) [vert4] {\scriptsize 1};
\end{scope}

\begin{scope}[rotate=240]
\node (5') at (0:1) [vert3] {\scriptsize 9};
\end{scope}

\begin{scope}[rotate=300]
\node (6') at (0:1) [vert3] {\scriptsize 10};
\end{scope}

\path[r] (1') to (2');
\path[r] (2') to (3');
\path[r] (3') to (4');
\path[r] (4') to (5');
\path[r] (5') to (6');
\path[r] (6') to (1');

\end{scope}
\end{scope}


\begin{scope}[shift={(4,0)}]
\begin{scope}{rotate=0}

\node (1'') at (0:1) [vert3] {\scriptsize 16};

\begin{scope}[rotate=60]
\node (2'') at (0:1) [vert2] {\scriptsize 13};
\end{scope}

\begin{scope}[rotate=120]
\node (3'') at (0:1) [vert2] {\scriptsize 12};
\end{scope}

\begin{scope}[rotate=180]
\node (4'') at (0:1) [vert3] {\scriptsize 11};
\end{scope}

\begin{scope}[rotate=240]
\node (5'') at (0:1) [vert3] {\scriptsize 14};
\end{scope}

\begin{scope}[rotate=300]
\node (6'') at (0:1) [vert3] {\scriptsize 15};
\end{scope}

\path[r] (1'') to (2'');
\path[r] (2'') to (3'');
\path[r] (3'') to (4'');
\path[r] (4'') to (5'');
\path[r] (5'') to (6'');
\path[r] (6'') to (1'');

\end{scope}
\end{scope}


\begin{scope}[shift={(6,0)}]
\begin{scope}{rotate=0}

\node (1''') at (0:1) [vert5] {\scriptsize 19};

\begin{scope}[rotate=60]
\node (2''') at (0:1) [vert5] {\scriptsize 18};
\end{scope}

\begin{scope}[rotate=120]
\node (3''') at (0:1) [vert5] {\scriptsize 17};
\end{scope}

\begin{scope}[rotate=180]
\node (4''') at (0:1) [vert3] {\scriptsize 16};
\end{scope}

\begin{scope}[rotate=240]
\node (5''') at (0:1) [vert5] {\scriptsize 21};
\end{scope}

\begin{scope}[rotate=300]
\node (6''') at (0:1) [vert5] {\scriptsize 20};
\end{scope}

\path[r] (1''') to (2''');
\path[r] (2''') to (3''');
\path[r] (3''') to (4''');
\path[r] (4''') to (5''');
\path[r] (5''') to (6''');
\path[r] (6''') to (1''');

\end{scope}
\end{scope}


\begin{scope}[shift={(8,0)}]
\begin{scope}{rotate=0}

\node (1'''') at (0:1) [vert3] {\scriptsize 26};

\begin{scope}[rotate=60]
\node (2'''') at (0:1) [vert2] {\scriptsize 23};
\end{scope}

\begin{scope}[rotate=120]
\node (3'''') at (0:1) [vert2] {\scriptsize 22};
\end{scope}

\begin{scope}[rotate=180]
\node (4'''') at (0:1) [vert5] {\scriptsize 19};
\end{scope}

\begin{scope}[rotate=240]
\node (5'''') at (0:1) [vert3] {\scriptsize 24};
\end{scope}

\begin{scope}[rotate=300]
\node (6'''') at (0:1) [vert3] {\scriptsize 25};
\end{scope}

\path[r] (1'''') to (2'''');
\path[r] (2'''') to (3'''');
\path[r] (3'''') to (4'''');
\path[r] (4'''') to (5'''');
\path[r] (5'''') to (6'''');
\path[r] (6'''') to (1'''');

\end{scope}
\end{scope}

\end{tikzpicture}
\end{center}
\caption{A prime vertex labeling of $\mathcal{C}_{6}^{5}$.}\label{fig:C6,5}
\end{figure}

\begin{theorem}\label{thm:C8m}
All $\mathcal{C}_{8}^{m}$ are prime.
\end{theorem}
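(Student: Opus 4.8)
The plan is to follow the template established by Theorems~\ref{thm:C4m} and~\ref{thm:C6m}: fix a vertex identification $c_{i,1},c_{i,2},\dots$ of the $i$-th constituent $8$-cycle $C_i$ (with cases on the residue of $i$ modulo a small integer, as in Theorem~\ref{thm:C6m}), exhibit an explicit bijective labeling $f\colon V(\mathcal{C}_8^m)\to\{1,\dots,8+7(m-1)\}=\{1,\dots,7m+1\}$ built from a block of seven new integers attached to each cycle beyond the first, and then check that every edge joins relatively prime labels by a finite case analysis. As in the earlier proofs, $C_1$ should receive $1,\dots,8$ in cyclic order, after which each $C_i$ with $i>1$ inherits one label from $C_{i-1}$ and is given seven essentially consecutive new labels in the range $7i-6,\dots,7i+1$; with $C_1$ carrying $1,\dots,8$ the new blocks start at $9,16,\dots$, so — unlike $\mathcal{C}_4^m$ — no ``overflow'' label needs to be recycled to $1$ at the last cycle (and if it were convenient to shift $C_1$'s labels, a boundary adjustment at $C_m$ exactly analogous to the one in Theorem~\ref{thm:C4m} would still work).

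The one structural point that makes this harder than Theorems~\ref{thm:C4m}--\ref{thm:C6m} is the placement of the shared vertices. By the defining recursion, the two vertices that an interior cycle $C_i$ shares with $C_{i-1}$ and $C_{i+1}$ split $C_i$ into two paths of length $4$; that is, the two shared vertices are antipodal in $C_i$. In $C_8$ the two antipodal positions lie in the \emph{same} part of the bipartition, so the two shared labels must have the \emph{same} parity, after which the even labels of $C_i$ — which must form an independent set in $C_8$ — have to be placed on the opposite part. This rules out simply carrying forward the largest label of a consecutive block, since that would make the two shared labels differ by $7$ and hence have opposite parity; instead one alternates which label is passed to the next cycle, exactly as the parity-based identification does for $\mathcal{C}_4^m$, producing slightly ``gapped'' blocks whose even labels are few enough and spread out enough to occupy an independent set.

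With the identification and labeling chosen this way, the verification is routine. Every edge of $\mathcal{C}_8^m$ lies along one of the two $4$-paths of some $C_i$ — joining two consecutive integers, hence coprime — or is one of the few edges incident to a shared vertex, joining labels that differ by at most $7$; a common prime factor of such a pair can only be $2$, $3$, $5$, or $7$. The factor $2$ is excluded by the parity arrangement above. The factor $7$ cannot occur: the seven new labels of $C_i$ lie in an interval of length less than $14$ whose endpoints are chosen $\not\equiv 0\pmod 7$, so the block meets at most one multiple of $7$, and in any case the two shared vertices are antipodal, hence not adjacent, so the unique pair of labels differing by exactly $7$ is never an edge. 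The factors $3$ and $5$ are the substance of the case analysis: one checks, for each residue class of $i$ modulo a small number — modulo $3$ for the multiples of $3$, with a refinement modulo $15$ for the stray multiples of $5$, just as the $m=6$ part of Theorem~\ref{thm:CnP2Sm} used a refinement modulo $15$ — that the chosen permutation of the block never places two multiples of $3$, or two multiples of $5$, at adjacent positions, swapping two suitable labels in the finitely many offending residue classes.

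The step I expect to be the main obstacle is precisely the bookkeeping for that last point: simultaneously meeting, for \emph{every} $i$, the four requirements that (i) the two shared labels have the correct common parity and sit at antipodal vertices, (ii) the even labels form an independent set in $C_8$, (iii) no two multiples of $3$ and no two multiples of $5$ are adjacent, and (iv) the label $C_i$ hands to $C_{i+1}$ is exactly the smallest label $C_{i+1}$ expects to inherit, together with the degenerate treatment of $C_1$ and of the carried label $1$. Once a consistent family of permutations realizing all four is written down, the proof reduces — as the authors note for the other families — to a finite check of a bounded number of $\gcd$ computations in each residue class, and the theorem follows.
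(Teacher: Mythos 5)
Your structural analysis is sound and in fact matches the paper's setup: the two shared vertices of an interior cycle are antipodal in $C_8$, so the two shared labels must have the same parity, which forces the identification (and hence which label gets carried to the next cycle) to alternate with the parity of $i$ --- this is exactly what the paper's vertex identification does. But the proposal stops exactly where the proof has to begin: you never write down the labeling function, and you explicitly defer the ``consistent family of permutations'' meeting your conditions (i)--(iv) as the main obstacle. Since the entire content of the theorem is an explicit labeling together with a finite coprimality check, what you have is a plan rather than a proof, and that is a genuine gap. The missing construction is the paper's: label $C_1$ by $1,\dots,8$ in cyclic order, and for $i>1$ give $C_i$ the block $7i-5,\dots,7i+1$, placed consecutively along the two arcs emanating from the vertex shared with $C_{i-1}$, with the split alternating by the parity of $i$ so that the antipodal vertex (the one shared with $C_{i+1}$) receives $7i-2$ when $i$ is odd and $7i+1$ when $i$ is even.

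Moreover, the case analysis you anticipate for the primes $3$ and $5$ (residues of $i$ modulo $3$, a refinement modulo $15$, swaps in offending classes) never arises, and one supporting claim is off: in the scheme above the pair of labels differing by exactly $7$ (namely $7i-6$ and $7i+1$ for odd $i$, and $7i-9$ and $7i-2$ for even $i$) \emph{is} an edge, contrary to your assertion that such a pair is never adjacent; it is harmless because both labels are congruent to the same nonzero residue modulo $7$. With this placement every edge either involves the label $1$, or joins consecutive integers, or joins two odd integers differing by $4$, or joins two integers differing by $7$ neither of which is a multiple of $7$ --- all coprime, uniformly in $i$, with no dependence on residue classes of $i$ at all. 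So the bookkeeping you flag as the main obstacle dissolves once the natural block placement is written down; until it is written down and checked, the argument is incomplete.
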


\begin{proof}
The vertices of $\mathcal{C}_{8}^{m}$ are identified as follows. The vertices of $C_1$ are identified counterclockwise as $c_{1,1}, c_{1,2}, \ldots c_{1,8}$ where $c_{1,1}$ is the vertex also belonging to $C_2$. The remaining vertex identifications are based on the parity of $i$.

If $i$ is even, the vertices $c_{i,1}, c_{i,2}, c_{i,3}$ are identified clockwise from the vertex that is clockwise adjacent to the common vertex of $C_i$ and $C_{i-1}$, and the vertices $c_{i,4}, c_{i,5}, c_{i,6}, c_{i,7}$ are identified counterclockwise from the vertex that is counterclockwise adjacent to the common vertex of $C_i$ and $C_{i-1}$. If $i$ is odd and greater than 1, the vertices $c_{i,1}, c_{i,2}, \ldots c_{i,7}$ are identified clockwise from the vertex that is clockwise adjacent to the common vertex of $C_i$ and $C_{i-1}$. Note that in both cases, $c_{i,4}$ is the common vertex of $C_i$ and $C_{i+1}$.

The graph of of $\mathcal{C}_{8}^{m}$ is then labeled using the function given by
\begin{align*}
f(c_{i,k}) &=\begin{cases}
		k, & i=1, 1 \leq k \leq 8\\
        7i+k-6,& i>1, 1 \leq k \leq 7
        \end{cases}.
\end{align*}
It is again simple to verify that all adjacent vertices receive relatively prime labels.
\end{proof}

A example of a prime vertex labeling of $\mathcal{C}_{8}^{5}$ using the labeling given in Theorem~\ref{thm:C8m} appears in Figure~\ref{fig:C8,5}.


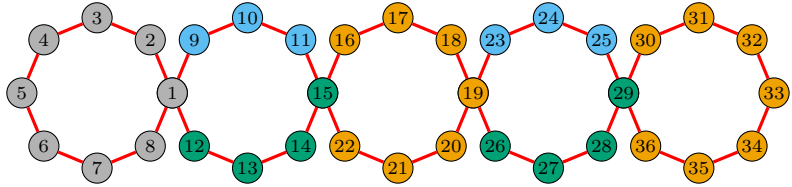
\begin{figure}
\begin{center}
\begin{tikzpicture}

\begin{scope}{rotate=0}

\node (1) at (0:1) [vert4] {\scriptsize 1};

\begin{scope}[rotate=45]
\node (2) at (0:1) [vert4] {\scriptsize 2};
\end{scope}

\begin{scope}[rotate=90]
\node (3) at (0:1) [vert4] {\scriptsize 3};
\end{scope}

\begin{scope}[rotate=135]
\node (4) at (0:1) [vert4] {\scriptsize 4};
\end{scope}

\begin{scope}[rotate=180]
\node (5) at (0:1) [vert4] {\scriptsize 5};
\end{scope}

\begin{scope}[rotate=225]
\node (6) at (0:1) [vert4] {\scriptsize 6};
\end{scope}

\begin{scope}[rotate=270]
\node (7) at (0:1) [vert4] {\scriptsize 7};
\end{scope}

\begin{scope}[rotate=315]
\node (8) at (0:1) [vert4] {\scriptsize 8};
\end{scope}

\path[r] (1) to (2);
\path[r] (2) to (3);
\path[r] (3) to (4);
\path[r] (4) to (5);
\path[r] (5) to (6);
\path[r] (6) to (7);
\path[r] (7) to (8);
\path[r] (8) to (1);

\end{scope}


\begin{scope}[shift={(2,0)}]
\begin{scope}{rotate=0}

\node (1') at (0:1) [vert3] {\scriptsize 15};

\begin{scope}[rotate=45]
\node (2') at (0:1) [vert2] {\scriptsize 11};
\end{scope}

\begin{scope}[rotate=90]
\node (3') at (0:1) [vert2] {\scriptsize 10};
\end{scope}

\begin{scope}[rotate=135]
\node (4') at (0:1) [vert2] {\scriptsize 9};
\end{scope}

\begin{scope}[rotate=180]
\node (5') at (0:1) [vert4] {\scriptsize 1};
\end{scope}

\begin{scope}[rotate=225]
\node (6') at (0:1) [vert3] {\scriptsize 12};
\end{scope}

\begin{scope}[rotate=270]
\node (7') at (0:1) [vert3] {\scriptsize 13};
\end{scope}

\begin{scope}[rotate=315]
\node (8') at (0:1) [vert3] {\scriptsize 14};
\end{scope}

\path[r] (1') to (2');
\path[r] (2') to (3');
\path[r] (3') to (4');
\path[r] (4') to (5');
\path[r] (5') to (6');
\path[r] (6') to (7');
\path[r] (7') to (8');
\path[r] (8') to (1');

\end{scope}
\end{scope}


\begin{scope}[shift={(4,0)}]
\begin{scope}{rotate=0}

\node (1'') at (0:1) [vert5] {\scriptsize 19};

\begin{scope}[rotate=45]
\node (2'') at (0:1) [vert5] {\scriptsize 18};
\end{scope}

\begin{scope}[rotate=90]
\node (3'') at (0:1) [vert5] {\scriptsize 17};
\end{scope}

\begin{scope}[rotate=135]
\node (4'') at (0:1) [vert5] {\scriptsize 16};
\end{scope}

\begin{scope}[rotate=180]
\node (5'') at (0:1) [vert3] {\scriptsize 15};
\end{scope}

\begin{scope}[rotate=225]
\node (6'') at (0:1) [vert5] {\scriptsize 22};
\end{scope}

\begin{scope}[rotate=270]
\node (7'') at (0:1) [vert5] {\scriptsize 21};
\end{scope}

\begin{scope}[rotate=315]
\node (8'') at (0:1) [vert5] {\scriptsize 20};
\end{scope}

\path[r] (1'') to (2'');
\path[r] (2'') to (3'');
\path[r] (3'') to (4'');
\path[r] (4'') to (5'');
\path[r] (5'') to (6'');
\path[r] (6'') to (7'');
\path[r] (7'') to (8'');
\path[r] (8'') to (1'');

\end{scope}
\end{scope}


\begin{scope}[shift={(6,0)}]
\begin{scope}{rotate=0}

\node (1''') at (0:1) [vert3] {\scriptsize 29};

\begin{scope}[rotate=45]
\node (2''') at (0:1) [vert2] {\scriptsize 25};
\end{scope}

\begin{scope}[rotate=90]
\node (3''') at (0:1) [vert2] {\scriptsize 24};
\end{scope}

\begin{scope}[rotate=135]
\node (4''') at (0:1) [vert2] {\scriptsize 23};
\end{scope}

\begin{scope}[rotate=180]
\node (5''') at (0:1) [vert5] {\scriptsize 19};
\end{scope}

\begin{scope}[rotate=225]
\node (6''') at (0:1) [vert3] {\scriptsize 26};
\end{scope}

\begin{scope}[rotate=270]
\node (7''') at (0:1) [vert3] {\scriptsize 27};
\end{scope}

\begin{scope}[rotate=315]
\node (8''') at (0:1) [vert3] {\scriptsize 28};
\end{scope}

\path[r] (1''') to (2''');
\path[r] (2''') to (3''');
\path[r] (3''') to (4''');
\path[r] (4''') to (5''');
\path[r] (5''') to (6''');
\path[r] (6''') to (7''');
\path[r] (7''') to (8''');
\path[r] (8''') to (1''');

\end{scope}
\end{scope}


\begin{scope}[shift={(8,0)}]
\begin{scope}{rotate=0}

\node (1'''') at (0:1) [vert5] {\scriptsize 33};

\begin{scope}[rotate=45]
\node (2'''') at (0:1) [vert5] {\scriptsize 32};
\end{scope}

\begin{scope}[rotate=90]
\node (3'''') at (0:1) [vert5] {\scriptsize 31};
\end{scope}

\begin{scope}[rotate=135]
\node (4'''') at (0:1) [vert5] {\scriptsize 30};
\end{scope}

\begin{scope}[rotate=180]
\node (5'''') at (0:1) [vert3] {\scriptsize 29};
\end{scope}

\begin{scope}[rotate=225]
\node (6'''') at (0:1) [vert5] {\scriptsize 36};
\end{scope}

\begin{scope}[rotate=270]
\node (7'''') at (0:1) [vert5] {\scriptsize 35};
\end{scope}

\begin{scope}[rotate=315]
\node (8'''') at (0:1) [vert5] {\scriptsize 34};
\end{scope}

\path[r] (1'''') to (2'''');
\path[r] (2'''') to (3'''');
\path[r] (3'''') to (4'''');
\path[r] (4'''') to (5'''');
\path[r] (5'''') to (6'''');
\path[r] (6'''') to (7'''');
\path[r] (7'''') to (8'''');
\path[r] (8'''') to (1'''');

\end{scope}
\end{scope}

\end{tikzpicture}
\end{center}
\caption{A prime vertex labeling of $\mathcal{C}_{8}^{5}$.}\label{fig:C8,5}
\end{figure}


For any positive integer $k$, a Mersenne number is an integer of the form $M_k=2^k-1$. If $M_k=2^k-1$ is a prime number, then $M_k$ is called a \emph{Mersenne prime}. The first few Mersenne primes are $M_2=2^2-1=3$, $M_3=2^3-1=7$, and $M_5=2^5-1=31$. There are 48 known Mersenne primes.

\begin{theorem}
Let $k\in \mathbb{N}$, $k\geq 3$, and let $n=2^k$. If $2^k-1$ is a Mersenne prime, then $\mathcal{C}_{n}^{m}$ has a prime vertex labeling.
\end{theorem}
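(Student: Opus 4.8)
The plan is to show that the construction of Theorem~\ref{thm:C8m} carries over verbatim with $8$ replaced by $n = 2^k$, which is exactly the ``obvious'' generalization promised in the remark preceding the statement. Fix $n = 2^k$. Label the first cycle $C_1$ with $1, 2, \ldots, n$ read around it, arranged so that the vertex $C_1$ shares with $C_2$ carries the label $1$. Each later cycle $C_i$ (with $2 \le i \le m$) shares one vertex with $C_{i-1}$ and, unless $i = m$, one vertex with $C_{i+1}$, and it contributes $n - 1$ new vertices; these receive the next block of $n - 1$ consecutive integers, wound around $C_i$ as in Theorem~\ref{thm:C8m} --- partly clockwise from one neighbor of the $C_{i-1}$-vertex and partly counterclockwise from its other neighbor, with the split governed by the parity of $i$ --- so that the $C_{i+1}$-vertex sits antipodally to the $C_{i-1}$-vertex and so that, apart from a bounded number of edges per cycle, adjacent vertices carry consecutive integers. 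Since $C_1$ contributes $n$ labels and each later cycle contributes $n - 1$, the resulting map is a bijection onto $\{1, 2, \ldots, n + (m-1)(n-1)\}$, as it must be.

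First I would clear the routine adjacencies: an edge both of whose endpoints are interior new vertices of a single cycle joins consecutive integers and so presents coprime labels. What remains are the few edges of each cycle incident to one of its (at most two) shared vertices, together with the wrap-around edge of $C_1$. Tracking the labels assigned to the shared vertices down the chain shows that this sequence starts at $1$ and grows by even increments, so every shared vertex carries an \emph{odd} label. For an edge incident to a shared vertex one then verifies that its two labels differ by either (i) a power of $2$ dividing $n = 2^k$ --- in which case coprimality is forced by the odd endpoint --- or (ii) exactly $2^k - 1$.

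The hypothesis is used only in case (ii): because $2^k - 1$ is a \emph{Mersenne prime}, two integers differing by $2^k - 1$ are coprime unless $2^k - 1$ divides one of them, so the crux --- the step I expect to be the main obstacle --- is to show that no edge of type (ii) has an endpoint divisible by $2^k - 1$. For $C_1$ the only candidate is $\{1, n\}$, which is harmless; for $i \ge 2$ the smaller endpoint of such an edge is the label of a shared vertex, and I would show that these labels lie in fixed nonzero residue classes modulo $2^k - 1$. In the ``odd'' case this class is $1$, inherited from the label of the vertex common to $C_1$ and $C_2$; in the ``even'' case it is again a fixed small nonzero residue. Both facts follow from the period-two structure of the construction: two consecutive cycles consume $2(2^k - 1)$ fresh labels, so passing from $C_i$ to $C_{i+2}$ shifts all the relevant labels by a multiple of $2^k - 1$ and leaves these residues unchanged. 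Hence every multiple of $2^k - 1$ is forced into the interior of a run of consecutive-integer labels, away from the type-(ii) edges. With this modular bookkeeping in hand, checking the remaining adjacencies is routine, just as for $\mathcal{C}_8^m$ (the instance $k = 3$); the boundary cycles $C_2$ and $C_m$ should be examined separately but raise no new difficulty.
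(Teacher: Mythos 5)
Your proposal is correct and takes essentially the same route as the paper, which likewise just transports the vertex identification and consecutive-block labeling of Theorem~\ref{thm:C8m} to $n=2^k$ and declares the coprimality verification routine. Your modular bookkeeping (odd labels on the shared vertices disposing of the power-of-two differences, and the fixed nonzero residues modulo the Mersenne prime $2^k-1$, namely $1$ and $2^{k-1}+1$, disposing of the differences equal to $2^k-1$) is exactly the verification the paper leaves unstated.
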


\begin{proof}
Both the vertex identification, and the labeling function, follow from what appeared in the proof of Theorem~\ref{thm:C8m}. It should be noted that these are different from the machinery used to show that $\mathcal{C}_{4}^{m}$ are prime. The vertices of $C_1$ are identified clockwise as $c_{1,1}, c_{1,2}, \ldots c_{1,2^k}$ where $c_{1,1}$ is the vertex also belonging to $C_2$. The remaining vertex identifications are based on the parity of $i$.

If $i$ is even, the vertices $c_{i,1}, c_{i,2}, \ldots c_{i,2^{k-1}-1}$ are identified clockwise from the vertex that is clockwise adjacent to the common vertex of $C_i$ and $C_{i-1}$, and the vertices $c_{i,2^{k-1}}, c_{i,2^{k-1}+1}, \ldots ,  c_{i,2^{k}-1}$ are identified counterclockwise from the vertex that is counterclockwise adjacent to the common vertex of $C_i$ and $C_{i-1}$. If $i$ is odd and greater than 1, the vertices $c_{i,1}, c_{i,2}, \ldots c_{i,2^{k}-1}$ are identified clockwise from the vertex that is clockwise adjacent to the common vertex of $C_i$ and $C_{i-1}$. Note that $c_{i,2^{k}-1}$ is the common vertex of $C_i$ and $C_{i+1}$ in both cases.

The graph $\mathcal{C}_{n}^{m}$ can be prime labeled using the function given by
\begin{align*}
f(c_{i,k}) &=\begin{cases}
		k, & i=1, 1 \leq k \leq n\\
        (n-1)i+k-(n-2), & i>1, 1 \leq k \leq n-1
        \end{cases}.
\end{align*}
It is again relatively straightforward to verify that all adjacent vertices receive relatively prime labels.
\end{proof}




Another historically significant sequence of integers that can be used to generate prime chains of cycles is the sequence of Fibonacci numbers. However, in this case, we will attach cycles of increasing size determined by the terms of the Fibonacci sequence. Recall that the Fibonacci numbers are defined by the recurrence relation $F_i=F_{i-1}+F_{i-2}$, where $F_1=1$ and $F_2=1$. The first several Fibonacci numbers are $1,1,2,3,5,8,13,21,34,55,89,144$.  To construct our graph,  begin with a single path consisting of $m+2$ vertices.  Starting from one end of the path, identify the vertices as $p_1, p_2,\ldots, p_{m+2}$. First, add an edge between $p_1$ and $p_3$. Next, for $i\geq 3$, build out a cycle by adding an additional $F_{i}$ number of edges between $p_i$ and $p_{i+1}$. The resulting graph consists of $m$ consecutive cycles, where the first cycle will consist of 3 vertices and for $j\geq 2$, the $j$th cycle will consist of $F_{j+1}+1$ many vertices. We denote this graph by $\mathcal{C}_F^m$ and call it a \emph{Fibonacci cycle chain}.

The well-known fact that consecutive Fibonacci numbers are relatively prime leads directly to the following result.

\begin{theorem}
All Fibonacci cycle chains $\mathcal{C}_F^m$ are prime.
\end{theorem}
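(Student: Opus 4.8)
The plan is to exhibit an explicit labeling that places consecutive Fibonacci numbers on the ``joint'' vertices of the chain, so that the one genuinely delicate family of edges reduces to the classical fact that $\gcd(F_n,F_{n+1})=1$. First I would fix notation following the construction: write $p_1,\ldots,p_{m+2}$ for the backbone path, so that the first cycle is the triangle on $p_1,p_2,p_3$, and for $2\le j\le m$ the $j$th cycle consists of the direct edge $p_{j+1}p_{j+2}$ together with a path $p_{j+1},q_{j,1},q_{j,2},\ldots,q_{j,F_{j+1}-1},p_{j+2}$ built from $F_{j+1}$ new edges through $F_{j+1}-1$ new interior vertices. Note that cycle $j-1$ and cycle $j$ meet exactly at $p_{j+1}$, that every edge of the graph lies in exactly one cycle, and that $F_{j+1}\ge 2$ for $j\ge 2$ so each such cycle has at least one interior vertex. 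Using $\sum_{k=1}^{n}F_k=F_{n+2}-1$, the graph $\mathcal{C}_F^m$ has $3+\sum_{j=2}^{m}F_{j+1}=F_{m+3}$ vertices, so a prime labeling must use the integers $1,2,\ldots,F_{m+3}$.

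Next I would define the labeling block by block. On the first triangle set $f(p_2)=1$, $f(p_1)=2$, $f(p_3)=3$; since $\{1,2,3\}$ is pairwise coprime this triangle is fine, and $f(p_3)=3=F_4$. Inductively, suppose cycle $j-1$ has been labeled with the integers $1,\ldots,F_{j+2}$, its shared vertex $p_{j+1}$ receiving the largest label $F_{j+2}$ (the base case $j=2$ is the previous sentence). For cycle $j$ with $2\le j\le m$, assign the next block $\{F_{j+2}+1,\ldots,F_{j+3}\}$, which has exactly $F_{j+1}$ elements, via
\begin{align*}
f(q_{j,k}) &= F_{j+2}+k, \quad 1\le k\le F_{j+1}-1,\\
f(p_{j+2}) &= F_{j+3}.
\end{align*}
Then $p_{j+2}$, the shared vertex with cycle $j+1$, again carries the largest label of its block, so the induction continues; after cycle $m$ the labels used are precisely $1,\ldots,F_{m+3}$, so $f$ is a bijection onto $\{1,\ldots,F_{m+3}\}$.

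Finally I would check adjacencies. For cycle $j$ with $2\le j\le m$, the edges $q_{j,k}q_{j,k+1}$ and the edge $q_{j,F_{j+1}-1}p_{j+2}$ join consecutive integers, and the edge $p_{j+1}q_{j,1}$ joins $F_{j+2}$ and $F_{j+2}+1$, again consecutive; all of these therefore have coprime labels. The one remaining edge is the direct edge $p_{j+1}p_{j+2}$, joining $F_{j+2}$ and $F_{j+3}$, and consecutive Fibonacci numbers are relatively prime---this follows from $\gcd(F_n,F_{n+1})=\gcd(F_n,F_{n-1})$ by the Euclidean algorithm, with base case $\gcd(F_1,F_2)=1$. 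Together with the first triangle this accounts for every edge of $\mathcal{C}_F^m$, so $f$ is a prime vertex labeling.

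I do not expect a serious obstacle here: the genuine idea is that the chain should be labeled ``outward'' with the joint vertices carrying consecutive Fibonacci values, so that each cycle closes precisely along a consecutive-Fibonacci edge, and everything else is consecutive integers. The only points requiring care are aligning the combinatorial indexing of the cycles and their interiors with the arithmetic indexing of the Fibonacci blocks, and confirming that every cycle has a nonempty interior so that it is the edge $p_{j+1}q_{j,1}$---not a forbidden multi-edge---whose coprimality needs checking.
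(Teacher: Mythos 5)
Your proposal is correct and is essentially the paper's own argument made explicit: the backbone path carries consecutive Fibonacci numbers (so the only non-consecutive-integer edges are consecutive-Fibonacci edges), and the interior of each cycle is filled with the intervening consecutive integers. Apart from the immaterial swap of the labels $1$ and $2$ on $p_1,p_2$, your labeling coincides with the one in the paper, and your detailed verification (vertex count $F_{m+3}$, bijectivity, edge checks) simply spells out what the paper leaves as "the obvious way."
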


\begin{proof}
To label a Fibonacci cycle chain, begin by consecutively labeling the path initially used to construct the graph with the Fibonacci numbers starting with the second 1 of the sequence.  The remaining vertices can be labeled using the complement of the Fibonacci sequence in the obvious way.
\end{proof}

Figure~\ref{fig:Fib} shows a Fibonacci cycle chain with a prime vertex labeling. Note that the colored vertices are those that are assigned Fibonacci numbers as labels, which form a path within the graph.

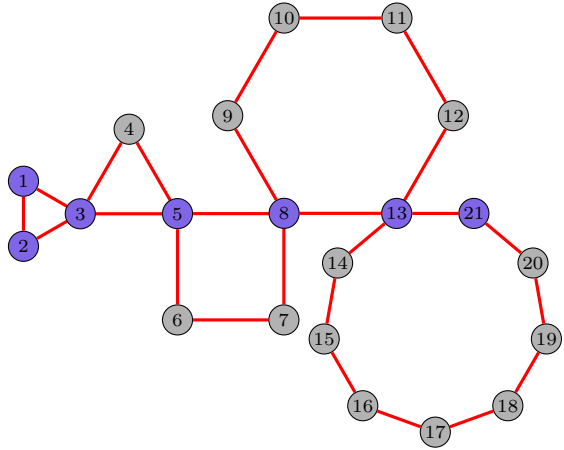
\begin{figure}
\begin{center}
\begin{tikzpicture}
\begin{scope}[rotate=90]
\begin{scope}[shift={(.425,.255)}]
\begin{scope}[rotate=-60]

\node (1) at (90:.5) [vertp] {\scriptsize 1};

\begin{scope}[rotate=120]
\node (2) at (90:.5) [vertp] {\scriptsize 2};
\end{scope}

\begin{scope}[rotate=240]
\node (3) at (90:.5) [] {\scriptsize};
\end{scope}

\path[r] (1) to (2);
\path[r] (2) to (3);
\path[r] (3) to (1);

\end{scope}
\end{scope}

\begin{scope}[shift={(.8,-.9)}]
\begin{scope}[rotate=0]

\node (1') at (0:.75) [vert] {\scriptsize 4};

\begin{scope}[rotate=120]
\node (2') at (0:.75) [vertp] {\scriptsize 3};
\end{scope}

\begin{scope}[rotate=240]
\node (3') at (0:.75) [] {\scriptsize};
\end{scope}

\path[r] (1') to (2');
\path[r] (2') to (3');
\path[r] (3') to (1');

\end{scope}
\end{scope}

\begin{scope}[shift={(-.28,-2.25)}]
\begin{scope}[rotate=0]

\node (1') at (45:1) [vertp] {\scriptsize 5};

\begin{scope}[rotate=135]
\node (2') at (0:1) [vert] {\scriptsize 6};
\end{scope}

\begin{scope}[rotate=225]
\node (3') at (0:1) [vert] {\scriptsize 7};
\end{scope}

\begin{scope}[rotate=315]
\node (4') at (0:1) [] {\scriptsize};
\end{scope}

\path[r] (1') to (2');
\path[r] (2') to (3');
\path[r] (3') to (4');
\path[r] (4') to (1');

\end{scope}
\end{scope}

\begin{scope}[shift={(1.73,-3.71)}]
\begin{scope}[rotate=30]

\node (1') at (0:1.5) [vert] {\scriptsize 10};

\begin{scope}[rotate=60]
\node (2') at (0:1.5) [vert] {\scriptsize 9};
\end{scope}

\begin{scope}[rotate=120]
\node (3') at (0:1.5) [vertp] {\scriptsize 8};
\end{scope}

\begin{scope}[rotate=180]
\node (4') at (0:1.5) [] {\scriptsize};
\end{scope}

\begin{scope}[rotate=240]
\node (5') at (0:1.5) [vert] {\scriptsize 12};
\end{scope}

\begin{scope}[rotate=300]
\node (6') at (0:1.5) [vert] {\scriptsize 11};
\end{scope}

\path[r] (1') to (2');
\path[r] (2') to (3');
\path[r] (3') to (4');
\path[r] (4') to (5');
\path[r] (5') to (6');
\path[r] (6') to (1');

\end{scope}
\end{scope}

\begin{scope}[shift={(-.98,-4.97)}]
\begin{scope}[rotate=0]

\node (1') at (20:1.5) [vertp] {\scriptsize 13};

\begin{scope}[rotate=60]
\node (2') at (0:1.5) [vert] {\scriptsize 14};
\end{scope}

\begin{scope}[rotate=100]
\node (3') at (0:1.5) [vert] {\scriptsize 15};
\end{scope}

\begin{scope}[rotate=140]
\node (4') at (0:1.5) [vert] {\scriptsize 16};
\end{scope}

\begin{scope}[rotate=180]
\node (5') at (0:1.5) [vert] {\scriptsize 17};
\end{scope}

\begin{scope}[rotate=220]
\node (6') at (0:1.5) [vert] {\scriptsize 18};
\end{scope}

\begin{scope}[rotate=260]
\node (7') at (0:1.5) [vert] {\scriptsize 19};
\end{scope}

\begin{scope}[rotate=300]
\node (8') at (0:1.5) [vert] {\scriptsize 20};
\end{scope}

\begin{scope}[rotate=340]
\node (9') at (0:1.5) [vertp] {\scriptsize 21};
\end{scope}

\path[r] (1') to (2');
\path[r] (2') to (3');
\path[r] (3') to (4');
\path[r] (4') to (5');
\path[r] (5') to (6');
\path[r] (6') to (7');
\path[r] (7') to (8');
\path[r] (8') to (9');
\path[r] (9') to (1');

\end{scope}
\end{scope}
\end{scope}
\end{tikzpicture}
\end{center}
\caption{A prime vertex labeling of the Fibonacci chain $\mathcal{C}_{F}^{5}$.}\label{fig:Fib}
\end{figure}


\section{Prisms}\label{sec:prismgraphs}


A \emph{prism graph} is a graph of the form $C_n\times P_2$, which consists of an inner and an outer $n$-cycle connected with spurs. In~\cite{Prajapati2014}, it was shown that if $n+1$ is prime, then $C_n\times P_2$ has a prime vertex labeling. Moreover, the authors of~\cite{Prajapati2014} showed that if $n \geq 3$ is odd, then $C_n\times P_2$ does not have a prime vertex labeling.  In this section, we will prove that if $n-1$ is prime, then $C_n\times P_2$  has a prime vertex labeling. The remaining cases involving prism graphs are currently open.

When $n$ is even, the initial starting strategy for labeling prism graphs is to divide the set $\{1, 2,  \ldots , 2n\}$ into two subsets, namely $\{1,2,\ldots , n\}$ and $\{n+1, n+2, \ldots , 2n\}$. We then attempt to label the inner cycle vertices clockwise using the consecutive integers from $1$ to $n$ and label the outer cycle vertices with consecutive integers from $n+1$ to $2n$ in the same direction so that the vertex labeled $n+1$ is adjacent to the vertex labeled 2. In this case, the difference between the labels assigned to adjacent pairs of inner and outer vertices will be $n-1$ except between 1 and $2n$. Since $n-1$ is prime, most pairs of labels for inner and outer vertices will be guaranteed to be relatively prime. However, we have a problem with the labels $n-1$ and $2(n-1)$ that the labeling in the next theorem will address by swapping the labels $1$ and $n$ with $n-1$ and $2n$, respectively. It should be noted that our technique does not generalize to arbitrary $C_n\times P_2$ (for $n$ even).

\begin{theorem}\label{thm:prisms}
If $n-1$ is a prime number and $n\geq 4$, then $C_n\times P_2$ is prime.
\end{theorem}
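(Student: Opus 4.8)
The plan is to write down one explicit labeling that realizes the ``shift by $n-1$'' idea together with the repair swap described just before the statement, and then to verify that adjacent vertices receive relatively prime labels, checking the edges in three families: the inner cycle, the outer cycle, and the spurs. I would begin by recording the only structural input needed: because $n\ge 4$, the prime $n-1$ is at least $3$, hence odd, so $n$ is even. I would fix notation by writing the inner $n$-cycle as $v_1v_2\cdots v_nv_1$, the outer $n$-cycle as $u_1u_2\cdots u_nu_1$, and the spurs as $v_iu_i$ for $1\le i\le n$, with cycle indices read modulo $n$.

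The labeling $f\colon V(C_n\times P_2)\to\{1,\dots,2n\}$ I would use is
\[
f(v_i)=\begin{cases} n-1,& i=1,\\ i,& 2\le i\le n-2,\\ 1,& i=n-1,\\ 2n,& i=n,\end{cases}
\qquad
f(u_i)=\begin{cases} n,& i=1,\\ n-1+i,& 2\le i\le n.\end{cases}
\]
This is exactly the assignment ``inner labels $1,2,\dots,n$ around the cycle; outer labels $n{+}1,n{+}2,\dots,2n$ around the cycle so that $n{+}1$ is adjacent to the vertex labeled $2$'', followed by the transpositions $1\leftrightarrow n-1$ and $n\leftrightarrow 2n$. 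A one-line count shows the inner labels form $\{1,2,\dots,n-1\}\cup\{2n\}$ and the outer labels form $\{n,n+1,\dots,2n-1\}$, so $f$ is a bijection onto $\{1,\dots,2n\}$.

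The verification then rests on two elementary facts: consecutive integers are coprime, and two integers differing by a prime $p$ are coprime unless both are divisible by $p$. Along the outer cycle the labels are the consecutive run $n,n+1,\dots,2n-1$, so only the closing edge $u_nu_1$ needs attention, where $\gcd(2n-1,n)=\gcd(n-1,n)=1$. Along the inner cycle the labels read $n-1,2,3,\dots,n-2,1,2n$, so all but a few edges join consecutive integers; of the exceptions, those meeting the vertex labeled $1$ are automatic, and the others reduce to $\gcd(n-1,2)=1$ (here $n-1$ odd is used) and $\gcd(2n,n-1)=\gcd(2,n-1)=1$. For a spur $v_iu_i$ with $2\le i\le n-2$ the labels are $i$ and $n-1+i$, differing by the prime $n-1$, hence coprime unless $n-1\mid i$, which cannot occur for $1\le i\le n-2$; the three remaining spurs carry the label pairs $\{n-1,n\}$, $\{1,2n-2\}$, and $\{2n,2n-1\}$, each visibly coprime. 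This completes the proof that $f$ is a prime vertex labeling.

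The one part that requires care, and where I expect the real work to lie, is the bookkeeping around the two transpositions: one must confirm that relabeling to destroy the only bad spur pair — the vertices originally carrying $n-1$ and $2n-2$ — does not create a fresh non-coprime pair on either cycle. This is why the check singles out precisely the handful of edges incident to the four vertices whose labels move, and disposes of them one at a time; every other edge is handled wholesale by the two elementary facts above. It is worth spot-checking the smallest instance, $n=4$ (so $n-1=3$), against the formulas before trusting the general argument.
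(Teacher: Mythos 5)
Your labeling is exactly the one used in the paper (inner labels $1,\dots,n$ and outer labels $n+1,\dots,2n$ shifted so labels on spurs differ by $n-1$, with the swaps $1\leftrightarrow n-1$ and $n\leftrightarrow 2n$), and your edge-by-edge verification matches the paper's argument, in fact spelling out a few gcd checks the paper leaves implicit. The proof is correct and essentially identical in approach.
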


\begin{proof}
Let $c_{1,1}, c_{1,2}, \ldots , c_{1,n}$ denote the vertices on the inner cycle and $c_{2,1}, c_{2,2}, \ldots , c_{2,n}$ denote the corresponding vertices on the outer cycle. The labeling function $f: V\to$ \{$1,2,\ldots,2n$\} is given by
\begin{align*}
   f(c_{1,i}) =& \begin{cases}
       i, & i=2,3,\ldots n-2\\
       n-1, & i=1\\
       1, & i=n-1\\
       2n,& i=n
   \end{cases}
\intertext{and}
   f(c_{2,i}) =& \begin{cases}
       i+n-1, & i=2,3,\ldots n\\
       n, &  i=1
     \end{cases}.
\end{align*}
Recall that consecutive integers are relatively prime, which forces most pairs of adjacent inner cycle vertices and most pairs of adjacent outer cycle vertices to receive relatively prime labels. To verify the relative primeness of the labels on the remaining pairs of adjacent vertices, first note that, for $i=2,3,\ldots , n-2$, we have
\begin{align*}
(f(c_{1,i}),f(c_{2,i}))&=(i,i+n-1)=1.
\intertext{Next, for $i=1$, we see that}
(f(c_{1,1}),f(c_{2,1}))&=(n-1,n)=1.
\intertext{Finally, observe that}
(f(c_{1,1}),f(c_{1,2}))&=(n-1,2)=1,\\
(f(c_{1,1}),f(c_{1,n}))&=(n-1,2n)=1,\\
(f(c_{1,1}),f(c_{1,2}))&=(n-1,2)=1,\\
(f(c_{1,n-2}),f(c_{2,n-2}))&=(n-2,2n-3)=1
\end{align*}

In Figure~\ref{fig:generalizedLabelingPrism}, we see a portion of the labeling of $C_n\times P_2$ ($n-1$ prime). Note that vertices absent from the figure are labeled clockwise using consecutive integers. Thus, $C_n \times P_2$ has a prime vertex labeling when $n-1$ is a prime number.
\end{proof}

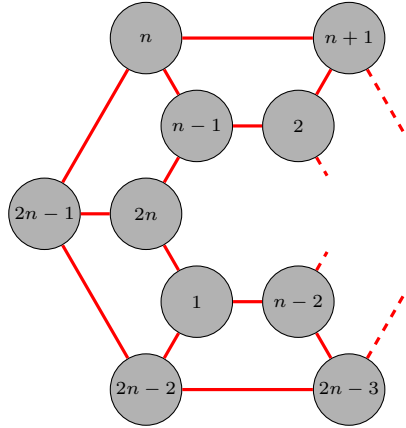
\begin{figure}
\centering
\begin{tikzpicture}[xscale=-.9,yscale=.9,auto]
\tikzstyle{vert1} = [circle, draw, fill=grey,inner sep=0pt, minimum size=9.5mm]
\tikzstyle{r} = [draw, very thick, red,-]
\begin{scope}[rotate=60]
\node (5) at (0:1.5) [vert1] {\scriptsize $n-1$};

\begin{scope}[rotate=60]
\node (2) at (0:1.5) [vert1] {\scriptsize $2$};
\end{scope}

\begin{scope}[rotate=180]
\node (4) at (0:1.5) [vert1] {\scriptsize $n-2$};
\end{scope}

\begin{scope}[rotate=240]
\node (1) at (0:1.5) [vert1] {\scriptsize $1$};
\end{scope}

\begin{scope}[rotate=300]
\node (12) at (0:1.5) [vert1] {\scriptsize $2n$};
\end{scope}

\path[r] (5) to (2);
\path[r] (4) to (1);
\path[r] (1) to (12);
\path[r] (12) to (5);

\node (6) at (0:3) [vert1] {\scriptsize $n$};

\begin{scope}[rotate=60]
\node (7) at (0:3) [vert1] {\scriptsize $n+1$};
\end{scope}

\begin{scope}[rotate=180]
\node (9) at (0:3) [vert1] {\scriptsize $2n-3$};
\end{scope}

\begin{scope}[rotate=240]
\node (10) at (0:3) [vert1] {\scriptsize $2n-2$};
\end{scope}

\begin{scope}[rotate=300]
\node (11) at (0:3) [vert1] {\scriptsize $2n-1$};
\end{scope}

\begin{scope}[rotate=120]
\node (13) at (0:3) [fill=white,minimum size=20mm] {\scriptsize $$};
\end{scope}

\begin{scope}[rotate=120]
\node (14) at (0:1.5) [fill=white,minimum size=10mm] {\scriptsize $$};
\end{scope}

\path[r] (6) to (7);
\path[r] (9) to (10);
\path[r] (10) to (11);
\path[r] (11) to (6);

\path[r] (5) to (6);
\path[r] (2) to (7);
\path[r] (4) to (9);
\path[r] (1) to (10);
\path[r] (12) to (11);
\path[r, dashed] (9) to (13);
\path[r, dashed] (2) to (14);
\path[r, dashed] (7) to (13);
\path[r, dashed] (4) to (14);
\end{scope}
\end{tikzpicture}
\caption{The generalized labeling for the prism $C_n\times P_2$ when $n-1$ is prime.}\label{fig:generalizedLabelingPrism}
\end{figure}

As an example, Figure~\ref{fig:LabelingPrism} depicts the labeling of Theorem~\ref{thm:prisms} for $C_6 \times P_2$.

\begin{figure}
\begin{center}
\begin{tikzpicture}[xscale=-1,yscale=1,auto]
\begin{scope}[rotate=60]
\node (5) at (0:1) [vert] {\scriptsize $5$};
\end{scope}

\begin{scope}[rotate=120]
\node (2) at (0:1) [vert] {\scriptsize $2$};
\end{scope}

\begin{scope}[rotate=180]
\node (3) at (0:1) [vert] {\scriptsize $3$};
\end{scope}

\begin{scope}[rotate=240]
\node (4) at (0:1) [vert] {\scriptsize $4$};
\end{scope}

\begin{scope}[rotate=300]
\node (1) at (0:1) [vert] {\scriptsize $1$};
\end{scope}

\begin{scope}
\node (12) at (0:1) [vert] {\scriptsize $12$};
\end{scope}

\path[r] (5) to (2);
\path[r] (2) to (3);
\path[r] (3) to (4);
\path[r] (4) to (1);
\path[r] (1) to (12);
\path[r] (12) to (5);

\begin{scope}[rotate=60]
\node (6) at (0:2) [vert] {\scriptsize $6$};
\end{scope}

\begin{scope}[rotate=120]
\node (7) at (0:2) [vert] {\scriptsize $7$};
\end{scope}

\begin{scope}[rotate=180]
\node (8) at (0:2) [vert] {\scriptsize $8$};
\end{scope}

\begin{scope}[rotate=240]
\node (9) at (0:2) [vert] {\scriptsize $9$};
\end{scope}

\begin{scope}[rotate=300]
\node (10) at (0:2) [vert] {\scriptsize $10$};
\end{scope}

\begin{scope}
\node (11) at (0:2) [vert] {\scriptsize $11$};
\end{scope}

\path[r] (6) to (7);
\path[r] (7) to (8);
\path[r] (8) to (9);
\path[r] (9) to (10);
\path[r] (10) to (11);
\path[r] (11) to (6);

\path[r] (5) to (6);
\path[r] (2) to (7);
\path[r] (3) to (8);
\path[r] (4) to (9);
\path[r] (1) to (10);
\path[r] (12) to (11);
\end{tikzpicture}
\end{center}
\caption{A prime vertex labeling of $C_6 \times P_2$.}\label{fig:LabelingPrism}
\end{figure}
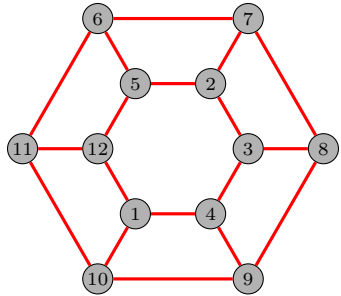


\section{Generalized Books}\label{sec:generalizedbooks}


A \emph{book} is a graph of the form $S_{n}\times P_{2}$, where $S_{n}$ is the star with $n$ spur vertices and $P_{2}$ is the path with 2 vertices. Using a simple parity argument, Seoud and Youssef showed that all books have a prime vertex labeling~\cite{Seoud1999}. In this section, we extend the work of Seoud and Youssef by providing a prime vertex labeling for some \emph{generalized books}, which are graphs of the form $S_n\times P_m$. Observe that $S_n\times P_m$ looks like $m-1$ books ``glued together." For example, the generalized book $S_6\times P_3$ is shown in Figure~\ref{fig:unlabeledS6xP3}.

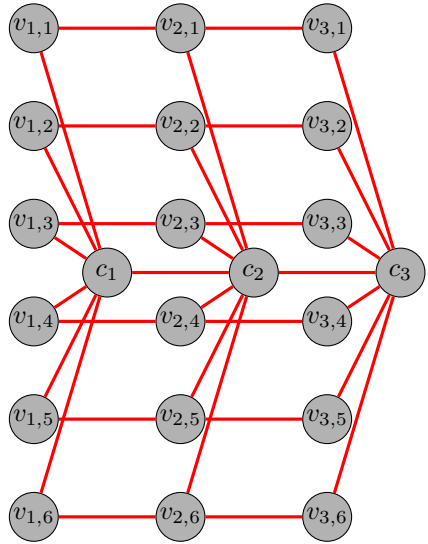
\begin{figure}
\begin{center}
\begin{tikzpicture}[scale=1.3]
\tikzstyle{vert7} = [circle, draw, fill=grey,inner sep=0pt, minimum size=6.5mm]
\foreach \y in {0,1,2,3,4,5} \node (0\y) at (0,\y) [vert7] {};
\foreach \y in {0,1,2,3,4,5} \node (15\y) at (1.5,\y) [vert7] {};
\foreach \y in {0,1,2,3,4,5} \node (3\y) at (3,\y) [vert7] {};
\foreach \x in {0.75,2.25,3.75} \node (spine{\x}) at (\x,2.5) [vert7] {};

\foreach \y in {0,1,2,3,4,5} \path[r] (0\y) to (15\y) to (3\y);
\foreach \y in {0,1,2,3,4,5} \path[r] (0\y) to (spine{0.75});
\foreach \y in {0,1,2,3,4,5} \path[r] (15\y) to (spine{2.25});
\foreach \y in {0,1,2,3,4,5} \path[r] (3\y) to (spine{3.75});
\path[r] (spine{0.75}) to (spine{2.25}) to (spine{3.75});

\node at (0.75,2.5) {$\scriptsize c_1$};
\node at (2.25,2.5) {$\scriptsize c_2$};
\node at (3.75,2.5) {$\scriptsize c_3$};
\node at (0,5) {$\scriptsize v_{1,1}$};
\node at (1.5,5) {$\scriptsize v_{2,1}$};
\node at (3,5) {$\scriptsize v_{3,1}$};
\node at (0,4) {$\scriptsize v_{1,2}$};
\node at (1.5,4) {$\scriptsize v_{2,2}$};
\node at (3,4) {$\scriptsize v_{3,2}$};
\node at (0,3) {$\scriptsize v_{1,3}$};
\node at (1.5,3) {$\scriptsize v_{2,3}$};
\node at (3,3) {$\scriptsize v_{3,3}$};
\node at (0,2) {$\scriptsize v_{1,4}$};
\node at (1.5,2) {$\scriptsize v_{2,4}$};
\node at (3,2) {$\scriptsize v_{3,4}$};
\node at (0,1) {$\scriptsize v_{1,5}$};
\node at (1.5,1) {$\scriptsize v_{2,5}$};
\node at (3,1) {$\scriptsize v_{3,5}$};
\node at (0,0) {$\scriptsize v_{1,6}$};
\node at (1.5,0) {$\scriptsize v_{2,6}$};
\node at (3,0) {$\scriptsize v_{3,6}$};
\end{tikzpicture}
\end{center}
\caption{The generalized book $S_6 \times P_3$.}\label{fig:unlabeledS6xP3}
\end{figure}

\begin{theorem}
All $S_{n}\times  P_{m}$ with $3 \leq m \leq 7$ are prime.
\end{theorem}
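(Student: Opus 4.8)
The plan is to exploit a simple structural decomposition of $S_n\times P_m$. Write the spine (the copy of $P_m$ running through the star centers) as $c_1-c_2-\dots-c_m$, and for $1\le i\le m$, $1\le k\le n$ let $v_{i,k}$ be the $k$-th spur vertex in the $i$-th copy of $S_n$, so that the edges are exactly $c_ic_{i+1}$, $v_{i,k}v_{i+1,k}$, and $c_iv_{i,k}$. Deleting the spine leaves the $n$ pairwise disjoint \emph{page paths} $v_{1,k}-v_{2,k}-\dots-v_{m,k}$, and each center $c_i$ is joined precisely to the $i$-th vertex of every page path. Thus $S_n\times P_m$ consists of $n+1$ disjoint copies of $P_m$ (the spine and the $n$ pages) together with the ``vertical'' edges $c_iv_{i,k}$.

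I would partition $\{1,2,\dots,m(n+1)\}$ into the consecutive blocks $B_0=\{1,\dots,m\}$ and $B_k=\{mk+1,\dots,mk+m\}$ for $1\le k\le n$, giving $B_0$ to the spine and $B_k$ to the $k$-th page path. Any edge of the spine or of a page path then joins two labels from a single block, and since two members of one block differ by at most $m-1\le 6$, their only possible common prime factors are $2$, $3$, $5$. Hence it suffices, within each block, to arrange its $m$ labels along the associated path so that path-consecutive labels are coprime --- for $m\le 7$ a finite elementary check whose worst case is just keeping the multiples of $2$, of $3$, and of $5$ off of adjacent positions. I would set $f(c_i)=i$ (or a fixed permutation of $1,\dots,m$ if that streamlines the bookkeeping), so that the spine edges join consecutive integers and are automatically coprime.

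The real content is controlling the vertical edges: with $f(c_i)=i$, the label placed in column $i$ of every page block must be coprime to $i$, i.e.\ it must avoid the prime divisors of $i$. For $3\le m\le 7$ this amounts to requiring that columns $2$, $4$ (and $6$, when $m\ge 6$) get labels coprime to $2$; columns $3$ and $6$ get labels coprime to $3$; column $5$ gets a label coprime to $5$; and column $7$ gets a label coprime to $7$. Because $2,4,6$ are pairwise nonadjacent in the path $1-2-\dots-m$, one can route the even members of $B_k$ to those columns, push the multiples of $3$ off columns $3$ and $6$, push the multiple of $5$ off column $5$, and push the unique multiple of $7$ in $B_k$ off column $7$ --- and a window of $m\le 7$ consecutive integers has enough slack to do all of this while also keeping pairs sharing a common small prime out of adjacent positions on the page path. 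Concretely, for each $m\in\{3,4,5,6,7\}$ I would write down $f(v_{i,k})$ as an explicit function of $i$ and $k$, with a case split on the residue of $k$ modulo the relevant small primes (modulo $2$ to track the odd/even pattern of $B_k$, modulo $3$, $5$, $7$ to locate the forced multiples), precisely in the style of the piecewise labelings in Theorem~\ref{thm:CnP2Sm}. As a small sample, for $m=3$ one can take $f(c_i)=i$ and, for $1\le k\le n$, $f(v_{1,k})=3k+3$ together with $f(v_{2,k})=3k+1,\ f(v_{3,k})=3k+2$ when $k$ is even and $f(v_{2,k})=3k+2,\ f(v_{3,k})=3k+1$ when $k$ is odd; one checks directly that each of the five adjacency types yields coprime labels.

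The hard part will be the $m=6$ and $m=7$ cases, where the vertical constraints are tightest: a window of seven consecutive integers has only three or four odd members, yet column $6$ needs an odd label that is moreover not a multiple of $3$ while columns $2$ and $4$ need two further odd labels, and one must simultaneously keep the lone multiple of $7$ out of column $7$ and the two or three multiples of $3$ out of columns $3$ and $6$ --- all without seating two multiples of a common prime from $\{2,3,5\}$ at consecutive positions on the page path. Checking that a consistent assignment exists for $k$ in every residue class modulo $\operatorname{lcm}(2,3,5,7)$ (or a coarser modulus that already separates the obstructions) is the bulk of the verification; once the finitely many classes are handled for each $m$, it follows that $S_n\times P_m$ is prime for all $n$.
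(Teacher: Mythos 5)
Your setup coincides with the paper's: centers labeled $1,\dots,m$, each row $v_{1,k},\dots,v_{m,k}$ labeled with the block of $m$ consecutive integers $\{mk+1,\dots,mk+m\}$ permuted according to the residue of $k$, so that the only checks are that the column-$i$ label is coprime to $f(c_i)=i$ and that adjacent within-block labels are coprime (and, as you note, only the primes $2,3,5$ can arise there since block labels differ by at most $m-1\le 6$). Your sample labeling for $m=3$ is correct, and your reduction to residue classes of $k$ modulo $\operatorname{lcm}(2,3,5,7)$ is sound, because divisibility of $mk+j$ by each of $2,3,5,7$ depends only on $k$ modulo that prime.

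The gap is that for $m=4,5,6,7$ you never actually produce the permutations: the assertion that ``a window of $m\le 7$ consecutive integers has enough slack'' is precisely the content of the theorem in those cases, not something that follows from the counting you give. The paper has to work for this: it exhibits explicit piecewise labeling functions for $m=4,5,6$ (with case splits on $k$ modulo $3$, $6$, and $5$, respectively), and for $m=7$ it needs ten distinct row permutations $A,\dots,J$ arranged in a $30$-row repeating pattern, with primality of the full graph then following because adding $210$ to a block preserves all residues modulo $2,\dots,7$ --- the same periodicity device you propose. Until you either write down such labeling functions or genuinely carry out the finite verification over the residue classes you describe (e.g.\ the $210$ classes for $m=7$, or a verified coarser modulus), the proof of the $m\ge 4$ cases --- in particular $m=6,7$, which you yourself flag as the hard part --- is missing; what you have is a correct strategy identical in outline to the paper's, with its essential constructive core left undone.
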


\begin{proof}
We will handle each of the cases involving $3 \leq m \leq 7$ separately. For each $3 \leq m \leq 7$, let $c_1, c_3, \ldots, c_m$ denote the vertices on the path through the center of each star $S_n$. Next, let $v_{i,1},v_{i,2},\ldots, v_{i,n}$ denote the degree one vertices on the $i$th star so that $v_{i,k}$ in the $i$th star is adjacent to $v_{i+1,k}$ in the $(i+1)$st star. For example, see the identification of vertices depicted in Figure~\ref{fig:unlabeledS6xP3}.

First, we consider $S_n \times P_3$. We define our labeling function $f:V\to \{1,2, \ldots 3n+3\}$ as follows.  Let $f(c_j)=j$. For $k$ odd, define $f(v_{i,k})=3k-i+4$. For $k$ even, let $f(v_{1,k})=3k+2$, $f(v_{2,k})=3k+3$, and $f(v_{3,k})=3k+1$. Using this labeling, one can quickly see that $S_n \times P_3$ has a prime vertex labeling. For example, see the labeling of $S_6 \times P_3$ given in Figure~\ref{fig:S6xP3}.

\begin{figure}
\begin{center}
\begin{tikzpicture}
\foreach \y in {0,1,2,3,4,5} \node (0\y) at (0,\y) [vert] {};
\foreach \y in {0,1,2,3,4,5} \node (15\y) at (1.5,\y) [vert] {};
\foreach \y in {0,1,2,3,4,5} \node (3\y) at (3,\y) [vert] {};
\foreach \x in {0.75,2.25,3.75} \node (spine{\x}) at (\x,2.5) [vert] {};

\foreach \y in {0,1,2,3,4,5} \path[r] (0\y) to (15\y) to (3\y);
\foreach \y in {0,1,2,3,4,5} \path[r] (0\y) to (spine{0.75});
\foreach \y in {0,1,2,3,4,5} \path[r] (15\y) to (spine{2.25});
\foreach \y in {0,1,2,3,4,5} \path[r] (3\y) to (spine{3.75});
\path[r] (spine{0.75}) to (spine{2.25}) to (spine{3.75});

\node at (0.75,2.5) {\scriptsize 1};
\node at (2.25,2.5) {\scriptsize 2};
\node at (3.75,2.5) {\scriptsize 3};
\node at (0,5) {\scriptsize 6};
\node at (1.5,5) {\scriptsize 5};
\node at (3,5) {\scriptsize 4};
\node at (0,4) {\scriptsize 8};
\node at (1.5,4) {\scriptsize 9};
\node at (3,4) {\scriptsize 7};
\node at (0,3) {\scriptsize 12};
\node at (1.5,3) {\scriptsize 11};
\node at (3,3) {\scriptsize 10};
\node at (0,2) {\scriptsize 14};
\node at (1.5,2) {\scriptsize 15};
\node at (3,2) {\scriptsize 13};
\node at (0,1) {\scriptsize 18};
\node at (1.5,1) {\scriptsize 17};
\node at (3,1) {\scriptsize 16};
\node at (0,0) {\scriptsize 20};
\node at (1.5,0) {\scriptsize 21};
\node at (3,0) {\scriptsize 19};
\end{tikzpicture}
\end{center}
\caption{A prime vertex labeling of $S_6 \times P_3$.}\label{fig:S6xP3}
\end{figure}
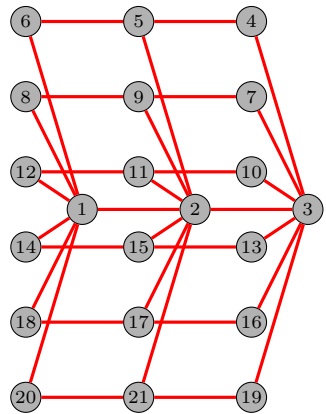

For $S_n \times P_4$, we define our labeling function $f:V\to \{1,2, \ldots 4n+4\}$ as follows. Let $f(c_j)=j$. For $k\equiv_3 1$, let $f(v_{1,k})=4k+2$, $f(v_{2,k})=4k+3$, $f(v_{3,k})=4k+4$, and $f(v_{4,k})=4k+1$. For $k\not\equiv_3 1$, define $f(v_{i,k})=4k-i+5$. It then follows that each $S_n \times P_4$ has a prime vertex labeling. An example of this labeling for $S_6 \times P_4$ appears in Figure~\ref{fig:S6xP4}.

\begin{figure}
\begin{center}
\begin{tikzpicture}
\foreach \y in {0,1,2,3,4,5} \node (0\y) at (0,\y) [vert] {};
\foreach \y in {0,1,2,3,4,5} \node (15\y) at (1.5,\y) [vert] {};
\foreach \y in {0,1,2,3,4,5} \node (3\y) at (3,\y) [vert] {};
\foreach \y in {0,1,2,3,4,5} \node (45\y) at (4.5,\y) [vert] {};
\foreach \x in {0.75,2.25,3.75,5.25} \node (spine{\x}) at (\x,2.5) [vert] {};

\foreach \y in {0,1,2,3,4,5} \path[r] (0\y) to (15\y) to (3\y) to (45\y);
\foreach \y in {0,1,2,3,4,5} \path[r] (0\y) to (spine{0.75});
\foreach \y in {0,1,2,3,4,5} \path[r] (15\y) to (spine{2.25});
\foreach \y in {0,1,2,3,4,5} \path[r] (3\y) to (spine{3.75});
\foreach \y in {0,1,2,3,4,5} \path[r] (45\y) to (spine{5.25});
\path[r] (spine{0.75}) to (spine{2.25}) to (spine{3.75}) to (spine{5.25});

\node at (0.75,2.5) {\scriptsize 1};
\node at (2.25,2.5) {\scriptsize 2};
\node at (3.75,2.5) {\scriptsize 3};
\node at (5.25,2.5) {\scriptsize 4};
\node at (0,5) {\scriptsize 6};
\node at (1.5,5) {\scriptsize 7};
\node at (3,5) {\scriptsize 8};
\node at (4.5,5) {\scriptsize 5};
\node at (0,4) {\scriptsize 12};
\node at (1.5,4) {\scriptsize 11};
\node at (3,4) {\scriptsize 10};
\node at (4.5,4) {\scriptsize 9};
\node at (0,3) {\scriptsize 16};
\node at (1.5,3) {\scriptsize 15};
\node at (3,3) {\scriptsize 14};
\node at (4.5,3) {\scriptsize 13};
\node at (0,2) {\scriptsize 18};
\node at (1.5,2) {\scriptsize 19};
\node at (3,2) {\scriptsize 20};
\node at (4.5,2) {\scriptsize 17};
\node at (0,1) {\scriptsize 24};
\node at (1.5,1) {\scriptsize 23};
\node at (3,1) {\scriptsize 22};
\node at (4.5,1) {\scriptsize 21};
\node at (0,0) {\scriptsize 28};
\node at (1.5,0) {\scriptsize 27};
\node at (3,0) {\scriptsize 26};
\node at (4.5,0) {\scriptsize 25};

\end{tikzpicture}
\end{center}
\caption{A prime vertex labeling of $S_6 \times P_4$.}\label{fig:S6xP4}
\end{figure}
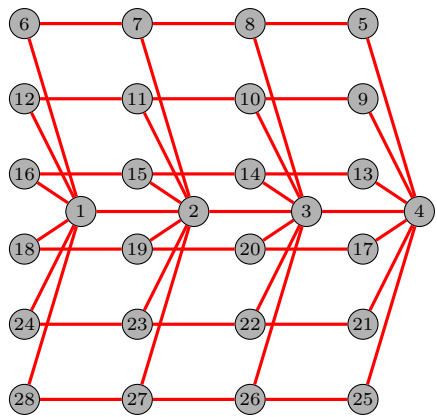

Next, for $S_n \times P_5$, we define our labeling function $f:V\to \{1,2, \ldots 5n+5\}$ in the following manner. Let $f(c_j)=j$. The rest of the labeling function is given by
\begin{align*}
f(v_{i,k}) &=\begin{cases}
		11-i, & k=1\\
    	5k+1+i, & 1 \leq i \leq 4, k>1,  k-1\equiv_{6}1, 5\\
    	5k+1, & i=5,  k>1,  k-1\equiv_{6}1, 5\\
        5k+2+i, & 1 \leq i \leq 3,  k>1,  k-1\equiv_{6}0, 2\\
        5k+6-i, & i=4,5,  k>1,  k-1\equiv_{6}0, 2\\
        5k+5-i, & 1 \leq i \leq 3,  k>1,  k-1\equiv_{6}3\\
        5k+5, & i=4,  k>1,  k-1\equiv_{6}3\\
        5k+1, & i=5,  k>1,  k-1\equiv_{6}3\\
        5k+6-i, & k>1,  k-1\equiv_{6}4
	\end{cases}.
\end{align*}
It then follows that each $S_n \times P_5$ has a prime vertex labeling. For example, see the labeling of $S_7 \times P_5$ given in Figure~\ref{fig:S7xP5}.

\begin{figure}
\begin{center}
\begin{tikzpicture}
\foreach \y in {0,1,2,3,4,5,6} \node (0\y) at (0,\y) [vert] {};
\foreach \y in {0,1,2,3,4,5,6} \node (15\y) at (1.5,\y) [vert] {};
\foreach \y in {0,1,2,3,4,5,6} \node (3\y) at (3,\y) [vert] {};
\foreach \y in {0,1,2,3,4,5,6} \node (45\y) at (4.5,\y) [vert] {};
\foreach \y in {0,1,2,3,4,5,6} \node (6\y) at (6,\y) [vert] {};
\foreach \x in {0.75,2.25,3.75,5.25,6.75} \node (spine{\x}) at (\x,3.5) [vert] {};

\foreach \y in {0,1,2,3,4,5,6} \path[r] (0\y) to (15\y) to (3\y) to (45\y) to (6\y);
\foreach \y in {0,1,2,3,4,5,6} \path[r] (0\y) to (spine{0.75});
\foreach \y in {0,1,2,3,4,5,6} \path[r] (15\y) to (spine{2.25});
\foreach \y in {0,1,2,3,4,5,6} \path[r] (3\y) to (spine{3.75});
\foreach \y in {0,1,2,3,4,5,6} \path[r] (45\y) to (spine{5.25});
\foreach \y in {0,1,2,3,4,5,6} \path[r] (6\y) to (spine{6.75});
\path[r] (spine{0.75}) to (spine{2.25}) to (spine{3.75}) to (spine{5.25}) to (spine{6.75});

\node at (0.75,3.5) {\scriptsize 1};
\node at (2.25,3.5) {\scriptsize 2};
\node at (3.75,3.5) {\scriptsize 3};
\node at (5.25,3.5) {\scriptsize 4};
\node at (6.75,3.5) {\scriptsize 5};
\node at (0,6) {\scriptsize 10};
\node at (1.5,6) {\scriptsize 9};
\node at (3,6) {\scriptsize 8};
\node at (4.5,6) {\scriptsize 7};
\node at (6,6) {\scriptsize 6};
\node at (0,5) {\scriptsize 12};
\node at (1.5,5) {\scriptsize 13};
\node at (3,5) {\scriptsize 14};
\node at (4.5,5) {\scriptsize 15};
\node at (6,5) {\scriptsize 11};
\node at (0,4) {\scriptsize 18};
\node at (1.5,4) {\scriptsize 19};
\node at (3,4) {\scriptsize 20};
\node at (4.5,4) {\scriptsize 17};
\node at (6,4) {\scriptsize 16};
\node at (0,3) {\scriptsize 24};
\node at (1.5,3) {\scriptsize 23};
\node at (3,3) {\scriptsize 22};
\node at (4.5,3) {\scriptsize 25};
\node at (6,3) {\scriptsize 21};
\node at (0,2) {\scriptsize 30};
\node at (1.5,2) {\scriptsize 29};
\node at (3,2) {\scriptsize 28};
\node at (4.5,2) {\scriptsize 27};
\node at (6,2) {\scriptsize 26};
\node at (0,1) {\scriptsize 32};
\node at (1.5,1) {\scriptsize 33};
\node at (3,1) {\scriptsize 34};
\node at (4.5,1) {\scriptsize 35};
\node at (6,1) {\scriptsize 31};
\node at (0,0) {\scriptsize 38};
\node at (1.5,0) {\scriptsize 39};
\node at (3,0) {\scriptsize 40};
\node at (4.5,0) {\scriptsize 37};
\node at (6,0) {\scriptsize 36};

\end{tikzpicture}
\end{center}
\caption{A prime vertex labeling of $S_7 \times P_5$.}\label{fig:S7xP5}
\end{figure}
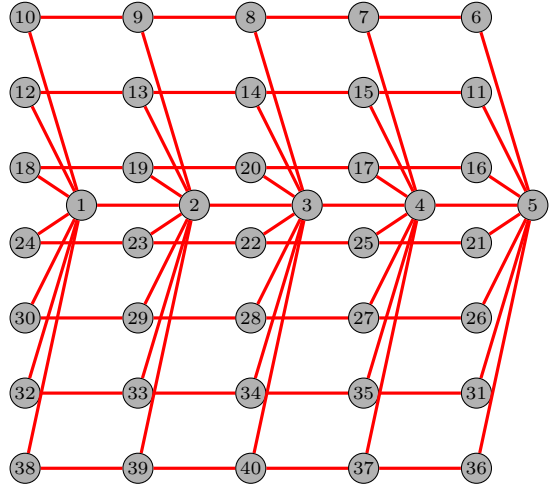

For $S_n \times P_6$, we define our labeling function $f:V\to \{1,2, \ldots 6n+6\}$ as follows. Let $f(c_j)=j$ and let
\begin{align*}
f(v_{i,k}) &=\begin{cases}
		6(k+1)+1-i, & k=1,2\\
    	6k+1+i, & 1 \leq i \leq 5,  k=3\\
    	6k+1, & i=6,  k=3\\
        6k+1-i, & k>3,  k-3\equiv_{5}1,2,3,4\\
    	6k+1+i, & 1 \leq i \leq5,  k>3,  k-3\equiv_{5}0\\
    	6k+1, & i=6,  k>3,  k-3\equiv_{5}0
\end{cases}.
\end{align*}
Using this labeling, we see that each $S_n \times P_6$ has a prime vertex labeling. An example of this labeling for $S_8 \times P_6$ appears in Figure~\ref{fig:S8xP6}.

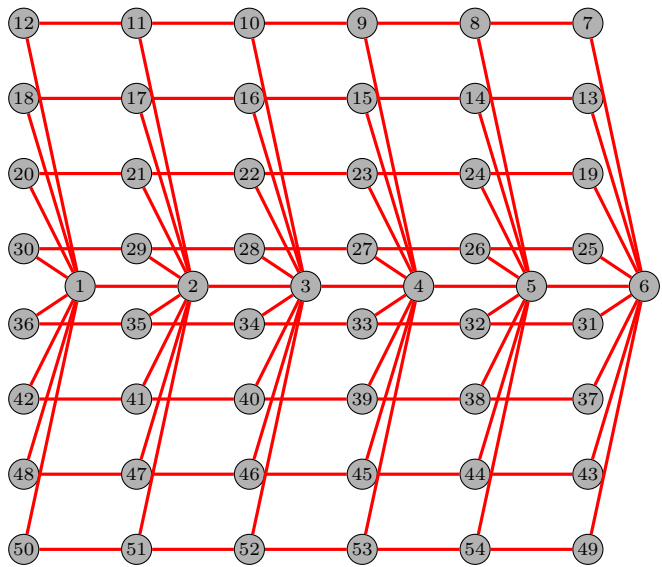
\begin{figure}
\begin{center}
\begin{tikzpicture}
\foreach \y in {0,1,2,3,4,5,6,7} \node (0\y) at (0,\y) [vert] {};
\foreach \y in {0,1,2,3,4,5,6,7} \node (15\y) at (1.5,\y) [vert] {};
\foreach \y in {0,1,2,3,4,5,6,7} \node (3\y) at (3,\y) [vert] {};
\foreach \y in {0,1,2,3,4,5,6,7} \node (45\y) at (4.5,\y) [vert] {};
\foreach \y in {0,1,2,3,4,5,6,7} \node (6\y) at (6,\y) [vert] {};
\foreach \y in {0,1,2,3,4,5,6,7} \node (75\y) at (7.5,\y) [vert] {};
\foreach \x in {0.75,2.25,3.75,5.25,6.75,8.25} \node (spine{\x}) at (\x,3.5) [vert] {};

\foreach \y in {0,1,2,3,4,5,6,7} \path[r] (0\y) to (15\y) to (3\y) to (45\y) to (6\y) to (75\y);
\foreach \y in {0,1,2,3,4,5,6,7} \path[r] (0\y) to (spine{0.75});
\foreach \y in {0,1,2,3,4,5,6,7} \path[r] (15\y) to (spine{2.25});
\foreach \y in {0,1,2,3,4,5,6,7} \path[r] (3\y) to (spine{3.75});
\foreach \y in {0,1,2,3,4,5,6,7} \path[r] (45\y) to (spine{5.25});
\foreach \y in {0,1,2,3,4,5,6,7} \path[r] (6\y) to (spine{6.75});
\foreach \y in {0,1,2,3,4,5,6,7} \path[r] (75\y) to (spine{8.25});
\path[r] (spine{0.75}) to (spine{2.25}) to (spine{3.75}) to (spine{5.25}) to (spine{6.75}) to (spine{8.25});

\node at (0.75,3.5) {\scriptsize 1};
\node at (2.25,3.5) {\scriptsize 2};
\node at (3.75,3.5) {\scriptsize 3};
\node at (5.25,3.5) {\scriptsize 4};
\node at (6.75,3.5) {\scriptsize 5};
\node at (8.25,3.5) {\scriptsize 6};
\node at (0,7) {\scriptsize 12};
\node at (1.5,7) {\scriptsize 11};
\node at (3,7) {\scriptsize 10};
\node at (4.5,7) {\scriptsize 9};
\node at (6,7) {\scriptsize 8};
\node at (7.5,7) {\scriptsize 7};
\node at (0,6) {\scriptsize 18};
\node at (1.5,6) {\scriptsize 17};
\node at (3,6) {\scriptsize 16};
\node at (4.5,6) {\scriptsize 15};
\node at (6,6) {\scriptsize 14};
\node at (7.5,6) {\scriptsize 13};
\node at (0,5) {\scriptsize 20};
\node at (1.5,5) {\scriptsize 21};
\node at (3,5) {\scriptsize 22};
\node at (4.5,5) {\scriptsize 23};
\node at (6,5) {\scriptsize 24};
\node at (7.5,5) {\scriptsize 19};
\node at (0,4) {\scriptsize 30};
\node at (1.5,4) {\scriptsize 29};
\node at (3,4) {\scriptsize 28};
\node at (4.5,4) {\scriptsize 27};
\node at (6,4) {\scriptsize 26};
\node at (7.5,4) {\scriptsize 25};
\node at (0,3) {\scriptsize 36};
\node at (1.5,3) {\scriptsize 35};
\node at (3,3) {\scriptsize 34};
\node at (4.5,3) {\scriptsize 33};
\node at (6,3) {\scriptsize 32};
\node at (7.5,3) {\scriptsize 31};
\node at (0,2) {\scriptsize 42};
\node at (1.5,2) {\scriptsize 41};
\node at (3,2) {\scriptsize 40};
\node at (4.5,2) {\scriptsize 39};
\node at (6,2) {\scriptsize 38};
\node at (7.5,2) {\scriptsize 37};
\node at (0,1) {\scriptsize 48};
\node at (1.5,1) {\scriptsize 47};
\node at (3,1) {\scriptsize 46};
\node at (4.5,1) {\scriptsize 45};
\node at (6,1) {\scriptsize 44};
\node at (7.5,1) {\scriptsize 43};
\node at (0,0) {\scriptsize 50};
\node at (1.5,0) {\scriptsize 51};
\node at (3,0) {\scriptsize 52};
\node at (4.5,0) {\scriptsize 53};
\node at (6,0) {\scriptsize 54};
\node at (7.5,0) {\scriptsize 49};

\end{tikzpicture}
\end{center}
\caption{A prime vertex labeling of $S_8 \times P_6$.}\label{fig:S8xP6}
\end{figure}

Lastly, for $S_n \times P_7$, we define our labeling function $f:V\to \{1,2, \ldots 7n+7\}$ as follows. Let $f(c_j)=j$. The remaining portion of the labeling function will involve 10 ordered ``row" (i.e., set of corresponding positions in each star) permutation patterns, which we will denote $A,B,C,D,E,F,G,H,I$, and $J$. Each ordered row permutation pattern takes the seven consecutive integers that will be used to label the vertices in the $k$th row $(7k+1,\ldots,7k+7)$ and assigns them the labels $w_1=7k+1, w_2=7k+2,\ldots,w_7=7k+7$. For instance, if permutation $A$ is applied to the $k$th row, then the first vertex in the $k$th row is given the label $w_2=7k+2$, the second vertex in the $k$th row is given the label $w_3=7k+3$, etc. Here are the 10 row permutations written in 1-line notation:
\begin{align*}
A&=[w_2,w_3,w_4,w_5,w_6,w_7,w_1]\\
B&=[w_2,w_3,w_6,w_7,w_4,w_5,w_1]\\
C&=[w_3,w_2,w_7,w_6,w_5,w_4,w_1]\\
D&=[w_5,w_6,w_7,w_2,w_3,w_4,w_1]\\
E&=[w_4,w_5,w_6,w_7,w_2,w_3,w_1]\\
F&=[w_3,w_4,w_5,w_6,w_7,w_2,w_1]\\
G&=[w_6,w_7,w_2,w_3,w_4,w_5,w_1]\\
H&=[w_2,w_1,w_3,w_7,w_6,w_5,w_4]\\
I&=[w_7,w_6,w_1,w_2,w_3,w_4,w_5]\\
J&=[w_7,w_6,w_5,w_4,w_3,w_2,w_1].
\end{align*}
After assigning $f(c_j)=j$ to the center of each star, the labeling for $S_n \times P_7$ has the following 30-row repeating pattern:
\[
C,E,J,A,J,A,D,E,J,A,F,G,C,H,J,A,J,A,I,E,F,E,J,A,D,E,J,A,J,A.
\]
It is straightforward to check that this ``block" of 30 rows provides a prime vertex labeling for $S_{30}\times P_7$.  To label the next ``block" of 30 rows in the generalized book, simply add 210 to each of the labels from the first ``block" of 30 rows.  Since any of the vertices in this second ``block" can only be adjacent to the center of its star and any neighbors in its row, and since congruence classes modulo 2, 3, 4, 5, 6 and 7 are invariant under addition of 210, all adjacent vertices in this second ``block" must still have relatively prime labels.  Thus, this second ``block" has a prime vertex labeling.  It then follows that the entire generalized book has a prime vertex labeling.
\end{proof}

We conjecture that all generalized books $S_n\times P_m$ have a prime vertex labeling; however, it appears that extending our results using the current approach becomes increasingly difficult as one increases the size of the path $P_m$.


\section{Conclusion}\label{sec:conclusion}


The prime vertex labeling functions included in this paper for cycle pendant stars may be extended to somewhat larger sizes of stars. Specifically, we conjecture that similar processes will work for cycle pendant stars up to stars of size fifteen. The reasoning for this restriction on stars of size fifteen is the following result of Pillai~\cite{Pillai1941}.

\begin{proposition}
When $k \geq 17$, we can find $k$ consecutive integers such that no integer in the set is relatively prime to all other integers in the set.
\end{proposition}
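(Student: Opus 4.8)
The plan is to exhibit, for every $k\ge 17$, a starting point $N$ for which the block $B_N=\{N,N+1,\dots,N+k-1\}$ has the required property, producing $N$ from the Chinese Remainder Theorem. The guiding observation is a reformulation: an element $N+i$ of $B_N$ is coprime to the product of the others precisely when no prime dividing $N+i$ divides another $N+j$; and if a prime $p$ divides $N+i$ and $N+j$ with $i\ne j$, then $p\mid i-j$, so only primes $p\le k-1$ are relevant. Thus it suffices to choose finitely many primes $p_1<\dots<p_r\le k-1$ and residues $a_1,\dots,a_r$, and to take any $N$ with $N\equiv a_s\pmod{p_s}$ for every $s$, subject to two requirements: each hit set $C_{p_s}=\{\,i\in\{0,1,\dots,k-1\}:p_s\mid N+i\,\}$ has at least two elements, and $\bigcup_s C_{p_s}=\{0,1,\dots,k-1\}$. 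Whenever such a choice exists the Chinese Remainder Theorem yields infinitely many admissible $N$, so the problem collapses to a combinatorial covering question about residue classes.

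Next I would isolate the bulk mechanism. If one takes $N\equiv 0\pmod{p}$ for \emph{every} prime $p<k$, then every index $i\in\{2,3,\dots,k-1\}$ is automatically covered: $i$ has a prime factor $p\le i<k$, and this $p$ divides both $N+i$ and $N$, the latter being the element $N+0$ of the block; moreover $|C_p|=\lfloor(k-1)/p\rfloor+1\ge 2$ since $p\le k-1$. Hence the \emph{only} uncovered index is $i=1$, and the whole difficulty is concentrated in a very short list of exceptional indices. One disposes of these by altering the residue classes of a small, carefully chosen set $T$ of primes $q$ with $\sqrt{k-1}<q\le k-2$ (so that moving $q$ off class $0$ un-covers only the single index $q$), picking the new classes so that index $1$ and every index $q\in T$ is covered, with each altered hit set still of size $\ge 2$. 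For $k=17$ the choice
\[
N\equiv 0\pmod{2},\ \ N\equiv 0\pmod{3},\ \ N\equiv 0\pmod{7},\ \ N\equiv 0\pmod{13},\ \ N\equiv 4\pmod{5},\ \ N\equiv 6\pmod{11}
\]
works: here $C_2$ is the set of even indices, $C_3=\{0,3,6,9,12,15\}$, $C_7=\{0,7,14\}$, $C_{13}=\{0,13\}$, $C_5=\{1,6,11,16\}$, and $C_{11}=\{5,16\}$, and one checks immediately that these six sets (each of size at least two) exhaust $\{0,1,\dots,16\}$; any $N$ in the resulting class modulo $30030$ then works. The coincidence that makes $k=17$ go through is $11\equiv 1\pmod 5$, so the single class $C_5$ absorbs two exceptional indices at once and the list of exceptional indices becomes empty.

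The hard part is to carry this out uniformly in $k$. One must show that $T$ can always be chosen so that index $1$ together with the switched primes is covered — which amounts to pairing these exceptional indices so that each pair's difference has a prime factor exceeding $\sqrt{k-1}$, with all such prime factors distinct and lying in $(\sqrt{k-1},k-2]$ — that the resulting congruences are mutually consistent (they are, being to distinct prime moduli), and that no index is un-covered without being re-covered. Guaranteeing enough primes of the required sizes in intervals near $\sqrt k$ and near $k$, together with the needed divisibility coincidences among them, is what forces one to invoke elementary facts about the distribution of primes (Bertrand's postulate, the divergence of $\sum_p 1/p$, and bounds for the largest prime factor of $q'-1$ as $q'$ ranges over primes up to $k$), while a bounded range of small values of $k$ is settled by direct verification. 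This bookkeeping — keeping the exceptional set short and ensuring it always closes up — is the main obstacle, and it is precisely the content of Pillai's argument in~\cite{Pillai1941}.
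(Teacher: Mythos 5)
First, a point of comparison: the paper does not prove this proposition at all --- it is quoted from Pillai~\cite{Pillai1941} solely to motivate why the labelings of Section~2 cannot be pushed past $m=15$ --- so there is no internal proof to measure your argument against. Judged on its own terms, your setup is correct: a common prime factor of $N+i$ and $N+j$ must divide $i-j$ and hence be at most $k-1$, and it does suffice to produce, via the Chinese Remainder Theorem, residue classes for finitely many primes $\le k-1$ whose hit sets each have size at least two and together cover $\{0,1,\dots,k-1\}$. Your $k=17$ witness is complete and correct (it is exactly the classical block $2184,\dots,2200$), and the observation that taking $N\equiv 0\pmod{p}$ for every prime $p<k$ leaves only the index $1$ uncovered is sound.

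As a proof of the proposition, however, there is a genuine gap: the statement quantifies over all $k\ge 17$, and everything beyond $k=17$ in your write-up is a sketch whose decisive step is asserted rather than proved. You never show that the exceptional indices can always be closed up using primes in $(\sqrt{k-1},\,k-2]$, and the simplest closure --- the one that makes $k=17$ work, namely two primes $q_1<q_2$ with $q_1>\sqrt{k-1}$, $q_2\equiv 1\pmod{q_1}$ and $q_1+q_2\le k-1$ --- already fails for some $k$: for $k=26$ the candidates are $q_1\in\{7,11,13\}$, and none admits a prime $q_2\equiv 1\pmod{q_1}$ with $q_1+q_2\le 25$ (and allowing $q_1=5$ breaks the ``only one index is un-covered'' premise, since $25=5^2$ lies in the window). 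So longer chains and more delicate residue choices are unavoidable, and the tools you invoke (Bertrand's postulate, divergence of $\sum_p 1/p$, bounds on the largest prime factor of $q'-1$) are neither shown nor obviously sufficient to guarantee such a configuration for every $k\ge 17$. Your closing sentence concedes that this bookkeeping ``is precisely the content of Pillai's argument,'' which is to say the general case is cited, not proved: what you have actually established is the case $k=17$ together with a correct framework into which Pillai's construction could be placed.
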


This implies that a new labeling scheme must be devised for finding prime vertex labelings of $C_{n}\star P_{2}\star S_{m}$ for $m \geq 15$.

In Section~\ref{sec:cyclechains}, we constructed prime vertex labelings for cycle chains having cycles of sizes $4, 6$, and $8$, respectively. We conjecture that all cycle chains are prime. Using the fact that consecutive Fibonacci numbers are relatively prime, we constructed a prime graph. One can likely construct similar graphs using other well-known sequences that exhibit traits of relative primeness.

We conjecture that all prisms constructed from even-order cycles have prime vertex labelings. We also have preliminary results implying that some families of generalized prisms of the form $C_n\times P_m$ for $m>2$ have prime vertex labelings.

In addition, we conjecture that every generalized book has a prime vertex labeling. But our pairwise matching approach would need to be replaced in order for this conjecture to be realized. For the interested reader, additional information can be found in Gallian's dynamic survey on graph labelings~\cite{Gallian2014}.


\bibliographystyle{amsplain}
\bibliography{PrimeLabelings}


\end{document}